\newtheorem{theorem}{Theorem}[section]
\newtheorem{lemma}[theorem]{Lemma}
\newtheorem{proposition}[theorem]{Proposition}
\newtheorem{hypothesis}{Hypothesis}
\numberwithin{equation}{section}
\title{Individual variability in dispersal and invasion speed}
\author[a,b]{Aled Morris}
\author[a,c]{Luca B\"orger}
\author[a,b]{Elaine Crooks}
\affil[a]{Centre for Biomathematics, College of Science, Swansea University}
\affil[b]{Department of Mathematics, College of Science, Swansea University}
\affil[c]{Department of Biosciences, College of Science, Swansea University}
\date{}
\begin{document}

\maketitle

\begin{abstract}
We model the growth, dispersal and mutation of two phenotypes of a species using reaction-diffusion equations, focusing on the biologically realistic case of small mutation rates. After verifying that the addition of a small linear mutation rate to a Lotka-Volterra system limits it to only two steady states in the case of weak competition, an unstable extinction state and a stable coexistence state, we prove that under some biologically reasonable condition on parameters the spreading speed of the system is linearly determinate. Using this result we show that the spreading speed is a non-increasing function of the mutation rate and hence that greater mixing between phenotypes leads to slower propagation. Finally, we determine the ratio at which the phenotypes occur at the leading edge in the limit of vanishing mutation.
\end{abstract}

\noindent \begin{amsSC}35C07, 35K57, 92D25\end{amsSC}\medskip

\noindent \begin{keywords}Invasive Species, Linear Determinacy, Population Growth, Mutation, Spreading Speeds, Traveling Waves\end{keywords}

\section{Introduction}

The speed at which a species expands its range is a fundamental parameter in ecology, evolution and conservation biology. A knowledge of this speed enables us to predict the ability of a species to keep up with the rate at which the climate changes or the rate at which an exotic species invades, two prominent ecological challenges \cite{parmesan, sakai}. It is known that traits such as dispersal and population growth affect the rate at which species expands its range, and there has been an intriguing suggestion from recent work that polymorphism in traits can cause a species invasion to occur at a faster rate than a single morph would in isolation \cite{elliott,elliott2}. Understanding the effect that each of a species traits has, and can potentially have, on its rate of spread is therefore important to understanding how the spread of a species can evolve.

Most common models of population dynamics incorporate aspects of dispersal and growth, \textit{e.g.} \cite{lewis, lewis2, murray, turchin}, however the mutation of one phenotype to another is a less common inclusion. Even the addition of a simple mutation term can dramatically affect the behaviour of a model. Cosner \cite{cosner} singles out two models in particular: the model of Elliott and Cornell \cite{elliott} in which a simple linear mutation is used, and the model of Bouin et al \cite{bouin} in which mutations are considered to act as a diffusion process in the phenotype space. Cosner notes that during their work Elliott and Cornell assume without proof that the spread rate of the two phenotypes in their system is determined by the linearisation of their system at the extinction state zero, \textit{i.e.}, that it is linearly determinate. We here prove this assumption in the case when the mutation rate is small, which is the case generally for all organisms, as natural selection typically acts to minimize mutation rate \cite{lynch}. Having shown this result and also established that the spreading speed is equal to the minimal speed of a class of travelling waves, we then prove further results regarding the behaviour of minimal speed travelling waves as a function of mutation rate when the mutation rate is small, and the composition of the leading edge of minimal speed travelling waves in the small mutation-rate limit.

Elliott and Cornell's model examines the interaction between an establisher phenotype with population density $n_e$, and a disperser phenotype with population density $n_d$, using a Lotka-Volterra competition system
\begin{equation}\label{elliott}
	\begin{aligned}	
		\frac{\partial n_e}{\partial t} &= D_e \frac{\partial^2 n_e}{\partial x^2}+r_e n_e(1-m_{ee} n_e - m_{ed} n_d) - \mu_e n_e + \mu_d n_d \\
		\frac{\partial n_d}{\partial t} &= D_d \frac{\partial^2 n_d}{\partial x^2}+r_d n_d(1-m_{de} n_e - m_{dd} n_d) - \mu_d n_d + \mu_e n_e.
	\end{aligned}
\end{equation}
The first term on the right hand side of each equation represents the dispersal of the phenotype through diffusion, where $D_e$ and $D_d$ are the dispersal rates of each morph. The second term describes the growth rate of the phenotype using a logistic term, this is similar to what is used in Fisher's model \cite{fisher}. We use $r_e$ and $r_d$ to represent the growth rate of each morph, $m_{ee}$ and $m_{dd}$ represent the intra-morph competition, while $m_{ed}$ and $m_{de}$ represent the inter-morph competition. The third and fourth terms represent a linear mutation between the phenotypes at mutation rates of $\mu_e$ and $\mu_d$. It is assumed that all parameters of the system are positive real numbers.

Following classical ecological theory on competition–dispersal tradeoffs \cite{lei,tilman}, and as in \cite{elliott}, we suppose that the establisher phenotype has the larger growth rate, while the disperser phenotype has the larger dispersal rate,
\begin{equation}\label{parmsrD}
	r_e > r_d, \,\,\,\, D_d > D_e.
\end{equation}
While \eqref{parmsrD} is not required in the proof of linear determinacy, we will make use of \eqref{parmsrD} in Section 4 where we give some further results on the system \eqref{elliott}. Following classical competition theory \cite{tilman2}, it is also supposed throughout that the intra-morph competition is greater than the inter-morph competition,
\begin{equation}\label{compe}
	m_{dd} > m_{ed}, \,\,\,\,  m_{ee} > m_{de},
\end{equation}
and we will also make further assumptions on the smallness of $m_{de}$ and $m_{ed}$ when necessary. Note that Roques et al \cite{roques} prove for a competition-diffusion model without mutation in which one species invade the territory of another, that linear determinacy only holds for sufficiently small inter-species competition. We have in mind throughout that the mutation is relatively small in comparison to the other parameters, to remain biologically realistic \cite{lynch}, and will again make more precise assumptions when necessary.

Kolmogorov, Petrovskii and Piskunov \cite{kpp} studied the existence of monotonic travelling wave solutions of the scalar form of the equation
\begin{equation}\label{general}
	\frac{\partial u}{\partial t} = A \frac{\partial^2 u}{\partial x^2} + f(u).
\end{equation}
Throughout this work we will consider travelling wave solutions to be  solutions of the equation \eqref{general} of the form $u(x,t) = w(x-ct)$, where $w:\mathbb{R}^n \to \mathbb{R}^n$ is called the wave profile and $c \in \mathbb{R}$ is the speed of the wave. Kolmogorov, Petrovskii and Piskunov studied the case when $n=1$, $A=d$ and $f(u)=ru(1-u)$, proposed by Fisher in \cite{fisher}, and proved there is a continuum of values of $c$ for which a monotonic travelling wave solution exists, specifically if $c > c^*$, where $c^*=2\sqrt{rd}$ is the minimal travelling wave speed. Aronson and Weinberger \cite{aronson} further studied this system and characterised $c^*$ as a spreading speed. These results were extended to cooperative systems of equations for a suitable class of nonlinearities $f$ by Volpert, Volpert and Volpert \cite{volpert3}. Note here that an interesting recent paper of Griette and Raoul \cite{griette} studies the existence and properties of travelling waves for a special case of system \eqref{elliott} in which, in particular, $D_e = D_d$ and $\mu_e = \mu_d$, including an explicit expression for the minimal travelling wave speed given by the linearisation about the extinction state.

The system studied in \cite{elliott} is certainly of the form \eqref{general} and so we let $u=(n_e,n_d)^T \in \mathbb{R}^2$, $A$ be a diagonal matrix containing the dispersal rates,
\begin{equation}\label{A}
	A = 
	\left( 
	\begin{array}{c c}
		D_e & 0\\
		0 & D_d
	\end{array} 
	\right),
\end{equation}
and $f$ a nonlinear function containing the growth, competition and mutation terms,
\begin{equation}
	f(n_e,n_d) = 
	\left( 
	\begin{array}{c}
		r_e n_e (1 - m_{ee} n_e - m_{ed} n_d) - \mu_e n_e + \mu_d n_d \\
		r_d n_d (1 - m_{de} n_e - m_{dd} n_d) + \mu_e n_e - \mu_d n_d
	\end{array} 
	\right).\label{f}
\end{equation}
Throughout this work we will use the notation $u > v$ to denote that the $i$th component of each vector satisfies $u_i > v_i$, for each $i$, similarly for $u<v$, $u \geq v$ and $u \leq v$. We say that $u \in \mathbb{R}^n$ is positive if $u>0$. The notation $u \in (a,b]$ denotes that the $i$th component of each vector satisfies the inequality $a_i < u_i \leq b_i$ for each $i$. Let $C^k(\mathbb{R}^n,\mathbb{R}^n)$ be the set of all continuous functions from $\mathbb{R}^n$ to $\mathbb{R}^n$ such that the $k$th partial derivative of each component exists and is continuous, and let $\mathcal{C}_{r}$ be the set $\mathcal{C}_{r}:= \{ u_i \in C(\mathbb{R},\mathbb{R}), 0\leq u_i \leq r_i, \,\, \mathrm{for} \,\, x \in \mathbb{R}, \,\, i = 1,\ldots,N \}$.

\begin{figure}[ht]
\begin{center}
\includegraphics[scale=1.0]{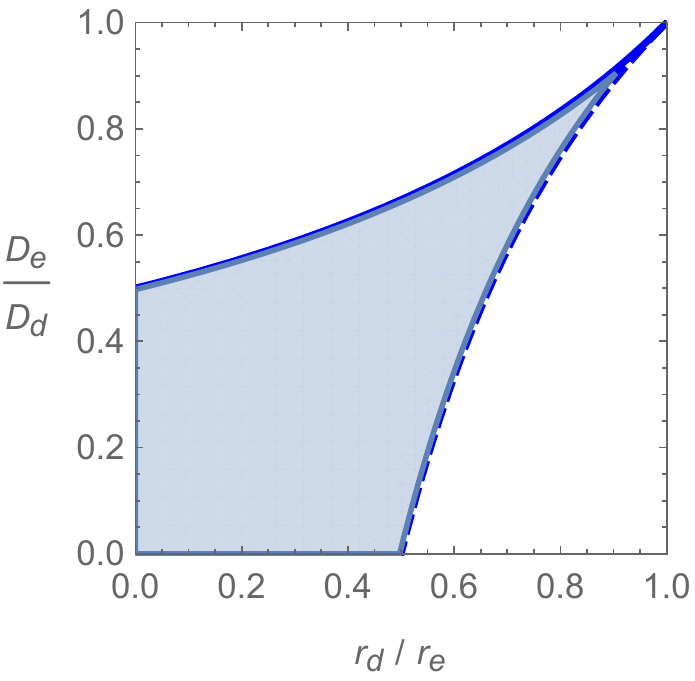}
\end{center}
\caption{Parameter regions in which the faster invasion speed observed by \cite{elliott} occurs. In the upper left unshaded region the solution travels at the speed at which the establisher travels without competition. In the lower right unshaded region the solution travels at the speed at which the disperser travels without competition. In the lower left shaded region the faster spreading speed is observed.}
\label{parameters}
\end{figure}

In \cite{elliott}, Elliott and Cornell investigated numerically the effect of varying the parameters on the spreading speed of the system and interestingly found that, for certain values of growth and dispersal rate, the system would spread faster in the presence of both phenotypes than just one phenotype would spread in the absence of mutation. They predict the spreading speed obtained for each set of parameters in the limit of small mutation, using the front propagation method of van Saarloos \cite{van}, making the assumption that the spreading speed of system \eqref{elliott} is linearly determinate in order to do so. As $\mu \to 0$ the three possible limiting speeds are
\begin{equation}\label{vfspeed}
v_e = 2\sqrt{r_e D_e},\,\,\,\,\,\,\,\, v_d = 2\sqrt{r_d D_d},\,\,\,\,\,\,\,\,v_f = \frac{\vert r_e D_d - r_d D_e \vert}{\sqrt{(r_e - r_d)(D_d - D_e)}}.
\end{equation}
Condition \eqref{parmsrD} is enough to ensure that $v_f$ exists and is faster than $v_e$ and $v_d$, which are the spreading speeds of the two Fisher equations satisfied by each phenotype in isolation. The faster speed $v_f$ is obtained in the region of the positive quadrant of $\left(r_d/r_e,\, D_e/D_d \right)$-space which satisfies the inequalities
\begin{equation}
	\frac{D_d}{D_e} + \frac{r_d}{r_e} > 2, \,\,\,\, \frac{D_e}{D_d} + \frac{r_e}{r_d} > 2,\label{fastersp}
\end{equation}
represented by the shaded area in Figure \ref{parameters}.

The spreading speed of the system is said to be linearly determinate if it is the same as the spreading speed of the system obtained when \eqref{elliott} is linearised about the $(0,0)$ equilibrium, namely
\begin{equation}\label{lin0}
	\begin{aligned}
		\frac{\partial n_e}{\partial t} &= D_e \frac{\partial^2 n_e}{\partial x^2}+(r_e - \mu_e) n_e + \mu_d n_d \\
		\frac{\partial n_d}{\partial t} &= D_d \frac{\partial^2 n_d}{\partial x^2}+(r_d  - \mu_d) n_d + \mu_e n_e.
	\end{aligned}
\end{equation}
This assumption is one that is suggested by the numerical studies in \cite{elliott}, but is not always true even in the scalar case \cite{benguria, hadeler, lucia, van}. Stokes \cite{stokes} calls the minimal wave speed $c^*$ ``pulled'' if it is equal to the linearised spreading speed, that is the speed of the front is determined by the individuals at the leading edge. Similarly the minimal wave speed is said to be ``pushed'' if its speed is greater than the linearised spreading speed, in this case the speed is determined by individuals behind the leading edge. Typically there are qualitative differences in wave behaviour depending on whether the wave is pushed or pulled, e.g. stability in the scalar case is discussed in \cite{rothe}.

In the case of systems, most sufficient conditions for linear determinacy require a cooperative assumption on the system. A system is cooperative when the off-diagonal elements of the Jacobian matrix of $f$ are always non-negative, \textit{i.e.},
\begin{equation}\label{copdef}
	\frac{\partial f_i (u)}{\partial u_j} \geq 0, \,\,\,\, \mathrm{if}\,\, i \neq j.
\end{equation}
In biological terms this would mean that each phenotype benefits from the presence of others. A cooperative system is useful mainly due to the existence of a comparison principle for such systems \cite{lui, weinberger}. The comparison principle is a key tool that can be used in the proof of linear determinacy.

\begin{theorem}[Comparison principle, {\cite[Theorem 3.1]{wang}}]\label{comparisonPrin}
	Let $A$ be a positive definite diagonal matrix. Assume that $f$ is a vector-valued  function in $\mathbb{R}^n$ that is continuous and piecewise continuously differentiable in $\mathbb{R}$, and that the underlying system \eqref{general} is cooperative. Suppose that $u(x,t)$ and $v(x,t)$ are bounded on $\mathbb{R} \times [0,\infty)$, and satisfy
	$$\frac{\partial u}{\partial t} - A \frac{\partial^2 u}{\partial x^2} - f(u) \leq \frac{\partial v}{\partial t} - A \frac{\partial^2 v}{\partial x^2} - f(v)$$
	If $u(x,t_0) \leq v(x,t_0)$ for $x \in \mathbb{R}$, then
	$$u(x,t) \leq v(x,t),  \,\,\,\, \mathrm{for}\,\,x \in \mathbb{R},\,\, t \geq t_0.$$\label{comparison}
\end{theorem}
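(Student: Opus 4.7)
The plan is to reduce the theorem to a maximum principle for a linear cooperative system satisfied by the difference $w := v - u$. Since $u$ and $v$ are bounded and $f$ is piecewise continuously differentiable, by integrating the Jacobian along the segment from $u$ to $v$ I would write $f(v) - f(u) = B(x,t)\, w$, where
$$B_{ij}(x,t) := \int_0^1 \frac{\partial f_i}{\partial u_j}\bigl(u + s(v-u)\bigr)\,ds.$$
Then $B$ is uniformly bounded on $\mathbb{R}\times[t_0,\infty)$, and by the cooperativity assumption \eqref{copdef} it satisfies $B_{ij}(x,t)\geq 0$ whenever $i\neq j$. Subtracting the two given differential inequalities and using this representation yields the pointwise vector inequality
$$\partial_t w - A\,\partial_{xx} w - B\, w \;\geq\; 0, \qquad w(x,t_0)\geq 0,$$
so the task reduces to showing that any bounded solution of this linear cooperative differential inequality with non-negative initial data remains non-negative.

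For the maximum principle step I would introduce a perturbation chosen both to break equality and to force the spatial infimum to be attained at a finite point. A convenient choice is
$$W(x,t) := w(x,t) + \epsilon\,\mathbf{1}\,e^{\lambda(t-t_0)}\bigl(1 + \delta\,\phi(x)\bigr),$$
where $\mathbf{1}=(1,\dots,1)^T$, $\phi(x)=\sqrt{1+x^2}$, and $\lambda,\epsilon,\delta>0$ are constants to be tuned. By construction $W(\cdot,t_0)>0$. Suppose for contradiction that $W$ fails to stay componentwise positive: since $w$ is bounded while $\phi(x)\to\infty$ as $|x|\to\infty$, there must be a first time $t_1>t_0$ and a finite $x_1\in\mathbb{R}$ at which some component $W_{i_0}$ vanishes, with $W_j\geq 0$ elsewhere for $t\leq t_1$. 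Standard calculus at the first contact point gives $\partial_t W_{i_0}\leq 0$ and $\partial_{xx}W_{i_0}\geq 0$. Substituting $w = W - \epsilon\mathbf{1}\,e^{\lambda(t-t_0)}(1+\delta\phi)$ into the inequality, and using $W_{i_0}(x_1,t_1)=0$, $W_j\geq 0$, and $B_{i_0 j}\geq 0$ for $j\neq i_0$, produces an estimate of the form
$$\partial_t W_{i_0} - D_{i_0}\,\partial_{xx}W_{i_0} \;\geq\; \epsilon\, e^{\lambda(t_1-t_0)}\bigl(\lambda - C_B\bigr) - C\,\delta,$$
where $C_B$ bounds the entries of $B$ and $C$ absorbs the lower-order contributions arising from $\phi$ and its derivatives. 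Choosing $\lambda$ large and $\delta$ small makes the right-hand side strictly positive, contradicting the sign of the left-hand side. Sending $\delta\to 0$ and then $\epsilon\to 0$ gives $w\geq 0$ on $\mathbb{R}\times[t_0,\infty)$, as required.

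The main obstacle is the unboundedness of the spatial domain: on $\mathbb{R}$ the infimum of a bounded function need not be attained, so the barrier $\phi$ must be engineered so that a first contact point is genuinely achieved at a finite $(x_1,t_1)$ while $\phi$, $\phi'$, $\phi''$ still contribute terms that the $\lambda$-gain can dominate. A secondary subtlety is that $f$ is only piecewise $C^1$, so the integrated Jacobian $B$ is a priori only defined almost everywhere along the segment from $u$ to $v$; however, only boundedness and the sign of its off-diagonal entries are used in the argument, and both of these persist on each $C^1$ piece, so the integrated matrix $B$ inherits them. Everything else is the standard parabolic maximum principle machinery.
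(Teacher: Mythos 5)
The paper offers no proof of this statement: it is imported verbatim as Theorem 3.1 of \cite{wang}, so there is nothing in-paper to compare your argument against. That said, your proposal is a correct, self-contained proof of the standard type for weakly coupled cooperative parabolic systems on an unbounded domain: the Hadamard-type decomposition $f(v)-f(u)=Bw$ with $B$ uniformly bounded (because $u,v$ are bounded and $f$ is locally Lipschitz) and with non-negative off-diagonal entries by \eqref{copdef}, followed by a perturbed first-contact argument in which the barrier $\phi(x)=\sqrt{1+x^2}$ forces the contact point to occur at finite $x$. Two details should be made explicit in a full write-up. First, the existence of the first contact point: on each bounded time interval the set where some component $W_i\le 0$ is spatially compact, since $w$ is bounded while $\epsilon\delta\phi(x)\to\infty$ as $|x|\to\infty$, so the infimum of contact times is attained at some finite $(x_1,t_1)$ with $W_{i_0}(x_1,t_1)=0$ and $W\ge 0$ on $\mathbb{R}\times[t_0,t_1]$; this is exactly why the barrier is needed and your sketch gestures at it correctly. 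Second, the order of quantifiers: since $0\le\phi''\le 1$, the bad term at the contact point is bounded by $\epsilon e^{\lambda(t_1-t_0)}\delta\max_i D_i$, so it suffices to take $\lambda>nC_B+\delta\max_i D_i$, a choice depending only on the bound for $B$ and the diffusion coefficients and not on $\epsilon$; this independence is what allows you to fix $(x,t)$ and send $\delta\to 0$ and then $\epsilon\to 0$ to conclude $w\ge 0$. The remark about $f$ being only piecewise $C^1$ is handled adequately, since only the almost-everywhere sign and boundedness of the integrand enter the definition of $B$. With these points filled in the argument is complete and is of the same flavour as the proofs in \cite{wang} and \cite{volpert3}.
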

Linear determinacy was shown to hold for some cooperative systems by Lui \cite{lui}, and the result was extended to more general cooperative systems by Weinberger, Lewis and Li \cite{weinberger}, who assumed, in particular, that for any positive eigenvector $q$ of $f'(0)$,
\begin{equation}\label{wcond}
	f(\alpha q) \leq \alpha f'(0) q, \,\,\,\, \mathrm{for} \,\, \alpha >0.
\end{equation}
This is equivalent to a condition imposed by Hadeler and Rothe \cite{hadeler} in the scalar case. Note that it is clearly sufficient to show that inequality \eqref{wcond} holds for all positive vectors, which implies the inequality holds for positive eigenvectors. Unfortunately we see from the Jacobian matrix that our system \eqref{elliott} is only partially cooperative,
\begin{equation}\label{Jf}
	J_f(n_e, n_d) = 
\left( \begin{array}{cc}
r_e(1-2 m_{ee} n_e - m_{ed} n_d)-\mu_e &  \mu_d - r_e m_{ed} n_e\\
\mu_e - r_d m_{de} n_d & r_d(1 - m_{de} n_e -2 m_{dd} n_d)-\mu_d
\end{array} \right).
\end{equation}
In fact it is only cooperative at small population densities due to the relative smallness of the mutation term.

After establishing linear determinacy, we turn to some ecologically important questions in the case of small mutation rate. We first exploit the Perron-Frobenius structure of $H_\beta$ in order to investigate the effect of mutation on spreading speed, and show that an increase in mutation between morphs results in a decrease in the value of the minimal spreading speed $c^*$. Secondly we investigate the composition of the leading edge of invasion in the limit of small mutation rate, and demonstrate the effects of dispersal, growth rate and mutation on this composition.

A framework for showing linear determinacy for certain partially cooperative systems was introduced by Haiyan Wang \cite{wang}. In order to use the main theorem of \cite{wang}, the following three hypotheses are required to hold. Hypothesis 1 centres around finding functions $f^+$ and $f^-$ which bound \eqref{f} from above and below respectively.

\begin{hypothesis}
Let $f \in C^2(\mathbb{R}^n,\mathbb{R}^n)$ be such that
\begin{enumerate}[{\normalfont (i)}]\itemsep0em
\item $f(0) = f(k) = 0$ were $k \in \mathbb{R}^n$ is a positive constant known as the coexistence equilibrium. There are no other positive equilibria of $f$ between 0 and $k$. Further, $f$ has a finite number of equilibria.
\item There exist functions $f^+$ and $f^-$, with respective positive equilibria $k^+$ and $k^-$, such that
$$k^- \leq k \leq k^+,$$
and for $u \in [0,k^+] $
$$f^-(u) \leq f(u) \leq f^+(u).$$
There are no other positive equilibria of $f^\pm$ between 0 and $k^\pm$.
\item The set $\mathcal{C}_{k^+}$ is an invariant set, in the sense that for any $u_0 \in \mathcal{C}_{k^+}$, the solution of \eqref{elliott} with initial condition $u_0$ exists and remains in $\mathcal{C}_{k^+}$ for all $t \in [0,\infty)$.
\item $f^+$ and $f^-$ are cooperative in the sense that they satisfy \eqref{copdef} for $u \in [0,k^{\pm}]$.
\item $f^\pm(u)$ have the same Jacobian matrix as $f(u)$ at $u=0$.
\end{enumerate}
\end{hypothesis}

If \eqref{general} admits a travelling wave solution $u(x,t)=w(x-ct)$, then substituting $w(x-ct)$ in to \eqref{elliott} we obtain the travelling wave equation
\begin{equation}\label{ode2o}
	A w''(\xi) + c w'(\xi) +f(w(\xi)) = 0,
\end{equation}
the linearisation of which about the origin is,
\begin{equation}\label{chap1lin}
	A w''(\xi) + c w'(\xi) +f'(0)w(\xi) = 0.
\end{equation}
Further, if we substitute the ansatz solution $w(\xi) = e^{- \beta \xi} q$ into \eqref{chap1lin}, we obtain the eigenvalue problem
\begin{equation}\label{eig22}
	\frac{1}{\beta}(\beta^2 A + f'(0))q = c q,
\end{equation}
where $\beta > 0$ is the spatial decay and $q > 0$ denotes the phenotypic distribution at the leading edge. We define
\begin{equation}\label{hlambda}
	H_{\beta} := \beta A + \beta^{-1}f'(0).
\end{equation}
The existence of positive $\beta$, $q$ satisfying \eqref{eig22} is a necessary but not sufficient condition for the existence of a solution $w$ of \eqref{ode2o} with $w>0$ and $w(\xi) \rightarrow 0$ as $\xi \rightarrow \infty$ {{\cite[page 163, Lemma 2.4]{volpert3}}}.

\begin{theorem}[Perron-Frobenius, {\cite{horn}}]\label{PerronF}
	Let the $n \times n$ matrix $P$ be irreducible and have non-negative off-diagonal elements, and suppose that $n \geq 2$. Then $P$ has a real, simple eigenvalue $\eta_\mathrm{PF}(P)$ such that an associated eigenvector is positive and every other eigenvalue of $P$ has real part less than $\eta_\mathrm{PF}(P)$. Moreover, any positive eigenvector must be a multiple of the eigenvector corresponding to $\eta_\mathrm{PF}(P)$. We call $\eta_\mathrm{PF}(P)$ the Perron-Frobenius eigenvalue of $P$ and the associated positive eigenvector $q$ with norm $\Vert q \Vert = 1$ the Perron-Frobenius eigenvector of $P$.
\end{theorem}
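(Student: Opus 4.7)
The statement is a version of the classical Perron--Frobenius theorem for matrices with non-negative off-diagonal entries (Metzler matrices), and my plan is to reduce it to the standard result for entrywise non-negative irreducible matrices by a scalar shift. Since $P$ has only finitely many diagonal entries, choose $c > 0$ large enough that $B := P + cI$ is entrywise non-negative. Adding $cI$ does not alter the off-diagonal structure, so $B$ inherits irreducibility from $P$, and $(\lambda + c, q)$ is an eigenpair of $B$ whenever $(\lambda, q)$ is one of $P$. It therefore suffices to establish, for $B$, the existence of a real simple eigenvalue $\eta_B$ with positive eigenvector $q$, the uniqueness of any positive eigenvector up to scaling, and the strict dominance $\operatorname{Re}\lambda < \eta_B$ for every other eigenvalue $\lambda$.

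For existence and positivity, I would apply Brouwer's fixed point theorem to the continuous self-map $g(x) := Bx/\lVert Bx \rVert_1$ of the simplex $\Delta := \{x \geq 0 : \sum_i x_i = 1\}$. The map is well-defined because irreducibility forces every column of $B$ to contain a non-zero entry, so $Bx \neq 0$ for $x \in \Delta$. The resulting fixed point $q \in \Delta$ satisfies $Bq = \eta_B q$ with $\eta_B := \lVert Bq \rVert_1 > 0$; strict positivity of $q$ then follows from $\bigl(\sum_{k=0}^{n-1} B^k\bigr)q = \bigl(\sum_{k=0}^{n-1} \eta_B^k\bigr)q$ together with the standard fact that $\sum_{k=0}^{n-1} B^k$ is entrywise positive for irreducible $B$. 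The same construction applied to $B^T$ produces a strictly positive left eigenvector $p$ with $p^T B = \eta_B p^T$.

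For dominance, uniqueness, and simplicity, let $\lambda$ be any eigenvalue of $B$ with eigenvector $y$. The entrywise estimate $B|y| \geq |By| = |\lambda|\,|y|$, paired with $p$, gives $|\lambda| \leq \eta_B$ via $\eta_B(p^T|y|) \geq |\lambda|(p^T|y|)$. If $|\lambda| = \eta_B$, equality forces $B|y| = \eta_B |y|$, so $|y|$ is a non-negative eigenvector, hence a positive multiple of $q$; tracing the phases of the $y_i$ through the directed graph of $B$ (using equality in the triangle inequality to align phases along each edge) then shows $\lambda$ must be a root-of-unity multiple of $\eta_B$, so $\operatorname{Re}\lambda < \eta_B$ whenever $\lambda \neq \eta_B$. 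Algebraic simplicity of $\eta_B$ follows from a further left-pairing: a putative generalised eigenvector $y'$ satisfying $By' = \eta_B y' + q$ would force $p^T q = 0$, contradicting $p, q > 0$. Any other positive eigenvector of $B$ must correspond to $\eta_B$ (otherwise left-pairing with $p$ is inconsistent) and is then a multiple of $q$ by geometric simplicity.

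The main obstacle is this dominance/simplicity analysis: while $|\lambda| \leq \eta_B$ is immediate from a single pairing with $p$, promoting it to strict inequality of \emph{real parts} requires the equality case of the entrywise triangle inequality combined with an irreducibility-driven phase argument through the directed graph of $B$, and excluding Jordan blocks at $\eta_B$ genuinely needs the dual Perron eigenvector. Translating back via $\lambda \mapsto \lambda - c$ then yields the theorem for $P$ with $\eta_{\mathrm{PF}}(P) := \eta_B - c$.
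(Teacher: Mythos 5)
The paper gives no proof of this statement: it is quoted as a classical result and attributed to Horn and Johnson \cite{horn}, so there is no internal argument to compare yours against. Taken on its own merits, your proposal is a correct outline of the standard proof for irreducible Metzler matrices: the shift $B = P + cI$ reduces to the entrywise non-negative irreducible case, Brouwer's fixed point theorem on the simplex produces a non-negative eigenpair, strict positivity follows from $\sum_{k=0}^{n-1}B^k > 0$, and the left Perron eigenvector $p$ drives the dominance and simplicity arguments. Two small points deserve care if you write this out in full. First, $p$ is constructed as the Perron vector of $B^T$, so a priori $p^T B = \eta'_B\, p^T$ with $\eta'_B$ the Perron value of $B^T$; you need the one-line pairing $\eta'_B\, p^T q = p^T B q = \eta_B\, p^T q$ together with $p^T q > 0$ to identify $\eta'_B = \eta_B$ before using $p$ in the estimate $\eta_B\,(p^T|y|) \geq |\lambda|\,(p^T|y|)$. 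Second, the step ``$|y|$ is a non-negative eigenvector, hence a positive multiple of $q$'' presupposes uniqueness, up to scaling, of non-negative eigenvectors for $\eta_B$; the usual argument (subtract the largest $t$ with $|y| - tq \geq 0$ and use that non-negative eigenvectors are strictly positive to force $|y| - tq = 0$) should be made explicit, since it is also what underlies your final uniqueness claim. With these filled in, the phase-alignment argument along the strongly connected digraph correctly shows that peripheral eigenvalues are root-of-unity multiples of $\eta_B$, hence have real part strictly less than $\eta_B$ unless they equal it, and the generalized-eigenvector pairing correctly excludes a nontrivial Jordan block at $\eta_B$.
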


\begin{hypothesis}
	The matrix $H_{\beta}$ is in block lower triangular form, with each diagonal block irreducible, has non-negative off-diagonal elements, and the first diagonal block has a positive Perron-Frobenius eigenvalue $\eta$ which is strictly larger than the Perron-Frobenius eigenvalues of all other irreducible diagonal blocks for $\beta>0$. There is also a corresponding positive eigenvector $q > 0$ which is continuous with respect to $\beta$ for $\beta>0$.
\end{hypothesis}

\noindent In our case, $H_\beta$ only consists of one irreducible diagonal block and has positive off-diagonal elements, so we know that $H_\beta$ itself has a Perron-Frobenius eigenvalue, which we call $\eta_\beta$, with corresponding eigenvector $q_\beta$. We thus rewrite \eqref{eig22} in the form
\begin{equation}\label{eigeta}
	H_\beta \, q_\beta = \eta_\beta \, q_\beta.
\end{equation}
The Perron-Frobenius eigenvalue $\eta_\beta$ is a function of $\beta$ where each value $\eta_\beta$ is a possible spreading speed of a decreasing travelling-wave solution of the linearised problem \eqref{lin0}. We define
\begin{equation}\label{cstar}
	c^* = \inf_{\beta>0} \eta_\beta .
\end{equation}
Note that the $c^*$ we define here is the spreading speed for the linear problem \eqref{lin0}.

\begin{hypothesis}
	For any $\alpha > 0$, $\beta > 0$
\begin{equation}\label{h3}
	f^\pm (\alpha q_\beta ) \leq \alpha f'(0) q_\beta,
\end{equation}
where $q_\beta$ is the Perron-Frobenius eigenvector of $H_\beta$.
\end{hypothesis}

\noindent We now introduce the following theorem from \cite{wang}, which is concerned both with spreading speeds and the existence of travelling waves.

\begin{theorem}[{{\cite[Theorem 2.1]{wang}}}]\label{wang}
If  \eqref{elliott} satisfies Hypotheses 1-3 then:
\begin{enumerate}[{\normalfont (i)}]\itemsep0em
\item For any $u_0 \in \mathcal{C}_{k^+}$ with compact support, the solution $u(x,t)$ of \eqref{general} with initial condition $u_0$ satisfies
$$\lim_{t \rightarrow \infty} \sup_{\vert x \vert \geq c t} u(x,t) = 0 \mathrm{, \,\,\,\, for}\,\,  c>c^*\mathrm{,}$$
where $c^*$  is defined in \eqref{cstar}.
\item For any vector $\omega \in \mathbb{R}^N$, $\omega > 0$, there is a positive $R_\omega$ with the property that if $u_0 \in \mathcal{C}_{k^+}$ and $u_0 \geq \omega$ on an interval of length $2R_{\omega}$, then $u(x,t)$ with initial condition $u_0$ satisfies
$$0<k^- \leq \liminf_{t \rightarrow \infty}\inf_{\vert x \vert \leq c t} u(x,t) \leq k^+, \,\,\,\, \mathrm{for}\,\,  0<c<c^*\mathrm{.}$$
\item For each $c>c^*$ there is a travelling wave solution $u(x,t) = w(x-c t)$, such that for $\xi \in \mathbb{R}$, $0 < w(\xi) \leq k^+$,  $\beta >0$ satisfying \eqref{eigeta} with $c=\eta_\beta$, and $q_\beta$ satisfying \eqref{eigeta} for this $\beta$,
$$0 < k^- \leq \liminf_{\xi \rightarrow - \infty} w(\xi) \leq \limsup_{\xi \rightarrow - \infty} w(\xi) \leq k^+,$$
$$\lim_{\xi \rightarrow \infty} w(\xi)\mathrm{e}^{\beta \xi} = q_\beta,$$
where $k^+$ is defined in Hypothesis 1.
\item For $c=c^*$ there is a non-constant travelling wave solution such that $0 \leq w(\xi) \leq k^+$, $\xi \in \mathbb{R}$.
\item For $0<c<c^*$ there is no travelling wave solution with $\liminf_{\xi \rightarrow - \infty}w(\xi) > 0$, and $\lim_{\xi \rightarrow + \infty}w(\xi) = 0$.
\end{enumerate}
\end{theorem}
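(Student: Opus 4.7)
The plan is to exploit the cooperative bounds $f^-$ and $f^+$ from Hypothesis 1 to reduce every assertion about the partially cooperative problem to one about two fully cooperative systems. By Hypothesis 1(v) all three systems share the same linearisation at $0$, hence the same matrix $H_\beta$ from \eqref{hlambda} and the same candidate linear speed $c^*$ from \eqref{cstar}; and Hypothesis 3 is precisely the Weinberger--Lewis--Li condition \eqref{wcond} for the $f^\pm$ systems, so the cooperative theory of \cite{lui,weinberger} gives both linear determinacy and existence of travelling waves of every speed $c\ge c^*$ for the systems governed by $f^\pm$. The remaining task is to transfer these conclusions across the sandwich $f^-\le f\le f^+$ using the comparison principle (Theorem \ref{comparisonPrin}); note that the comparison principle is not directly available for $f$, but \emph{is} available for the cooperative systems with nonlinearities $f^\pm$, and that is what lets the sandwich work.

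For the spreading result (i), denote by $u^+$ the solution of \eqref{general} with nonlinearity $f^+$ and the same initial datum $u_0\in\mathcal{C}_{k^+}$. Then $u$ is a subsolution of the $f^+$-problem, so $u\le u^+$ by comparison; invoking the cooperative version of (i) for $u^+$ yields the claim. For (ii), the mirror argument with $f^-\le f$ gives $u\ge u^-$; the liminf $\ge k^-$ comes from the cooperative version of the ``hair-trigger'' result for $u^-$ applied to a bump of height $\omega$ on a long enough interval, while the limsup $\le k^+$ follows from comparison of $u$ with the constant supersolution $k^+$, admissible thanks to the invariance in Hypothesis 1(iii).

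For travelling wave existence (iii), for each $c>c^*$ pick $\beta>0$ with $\eta_\beta=c$ and take
\[
\bar w(\xi):=\min\{k^+,\ \alpha q_\beta e^{-\beta\xi}\}
\]
as a supersolution for the $f^+$-travelling-wave equation \eqref{ode2o}: the pure exponential kills the linearised ODE by construction, Hypothesis 3 converts that identity into a differential inequality for $f^+$, and truncation at $k^+$ uses $f^+(k^+)=0$. A matching subsolution for the $f^-$-problem is obtained in the standard Hadeler--Rothe fashion by subtracting a faster-decaying correction $Me^{-(\beta+\delta)\xi}$ along a suitable positive direction and truncating at $0$. Monotone iteration between these two bounds in the cooperative sandwich, coupled with standard parabolic/ODE estimates, yields a travelling wave of speed $c$ with the stated asymptotics as $\xi\to\pm\infty$. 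Item (iv) is obtained by passing to the limit $c_n\downarrow c^*$: normalise each profile $w_n$ so that one fixed component attains a fixed intermediate value at $\xi=0$, use monotonicity to extract a locally uniform limit via Helly's theorem, and identify the limit as a non-constant travelling wave at $c=c^*$. Finally (v) is immediate from (ii): a putative wave of speed $c<c^*$ with $\liminf_{\xi\to-\infty}w>0$ and $\lim_{\xi\to+\infty}w=0$ would serve as an initial datum whose solution both stays $\equiv 0$ on the moving frame $x=ct+\xi_0$ for $\xi_0$ large and simultaneously satisfies $\liminf_{t\to\infty}\inf_{|x|\le c't}u\ge k^-$ for every $c<c'<c^*$, a contradiction.

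The hard part is the critical case $c=c^*$ in (iv). At that speed the two roots of $\eta_\beta=c$ coalesce, the exponential ansatz acquires a polynomial prefactor, and the neat super/sub-solution sandwich used for $c>c^*$ degenerates so that direct monotone iteration fails. The delicate points in the limit $c_n\downarrow c^*$ are (a) choosing a normalisation that prevents the profiles from sliding off to $0$ or $k^+$ uniformly, and (b) ruling out trivialisation of the pointwise limit; (a) is handled by an ordinate-fixing shift using monotonicity of the component profiles, while (b) uses the uniform lower bound $k^-$ on the left tail from (ii) combined with the strict positivity of the Perron--Frobenius eigenvector $q_\beta$ at the critical $\beta$ to keep the right tail non-degenerate.
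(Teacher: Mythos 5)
This statement is not proved in the paper at all: it is quoted verbatim (with minor notational adaptation) from Wang \cite[Theorem 2.1]{wang}, and the paper's actual work consists of verifying Hypotheses 1--3 for the specific system \eqref{elliott}. So there is no in-paper proof to compare against; what you have written is a reconstruction of Wang's argument. Your architecture is the right one --- sandwich the non-cooperative $f$ between cooperative $f^\pm$ sharing the linearisation at $0$, apply the Lui/Weinberger--Lewis--Li cooperative theory to $f^\pm$ under Hypothesis 3, and transfer conclusions via the comparison principle, which is exactly the strategy of \cite{wang} --- and your treatments of (i), (ii) and (v) are essentially correct.

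There are, however, two genuine gaps in your treatment of (iii) and (iv). First, ``monotone iteration between these two bounds'' is not available for the system with nonlinearity $f$: monotonicity of the iteration operator is precisely what cooperativity buys, and $f$ is only partially cooperative (see \eqref{Jf}), so the map need not preserve order on the interval between your sub- and supersolution. Wang instead runs a fixed-point argument (Schauder type) on the set of profiles trapped between the $f^-$-subsolution and the $f^+$-supersolution, using the cooperative comparison only for the bounding systems; you need some such device, not monotone iteration on $f$ itself. Second, in (iv) you propose extracting the critical wave via Helly's theorem ``using monotonicity of the component profiles'' --- but profiles of the non-cooperative system need not be monotone, and indeed the paper's own numerics (Figure \ref{twavesol}) exhibit a non-monotone component. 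The extraction must instead go through uniform elliptic/ODE estimates on \eqref{ode2o} and Arzel\`a--Ascoli, with the normalisation and non-degeneracy handled roughly as you describe. Note also that the conclusion of (iv) as stated is deliberately weak ($0\le w\le k^+$ only); the decay $w(\xi)\to 0$ as $\xi\to\infty$ at $c=c^*$, which your sketch implicitly claims via the right-tail non-degeneracy discussion, is not part of the theorem and was only established later by Girardin \cite{girardin}.
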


\noindent Note that we have made some natural modifications of {{\cite[Theorem 2.1]{wang}}} to fit our context of travelling wave solutions that tend to zero at $+ \infty$.

The first two parts of Theorem \ref{wang} tell us of the existence of a spreading speed $c^*$ for the system \eqref{elliott} and that this speed is the spreading speed of the linear problem, given by \eqref{cstar}. The remaining three parts tell us of the existence of travelling wave solutions for $c \geq c^*$ and gives some information about these travelling waves. Part (iii) tells us that travelling wave solutions of non-minimal speed exist, and that these solutions decay exponentially to $0$ as $\xi \to \infty$, and are bounded between $k^-$ and $k^+$ as $\xi \to -\infty$. Part (iv) tells us that travelling wave solutions of minimal speed exist and that they remain bounded between $0$ and $k^+$ for all $\xi \in \mathbb{R}$. Part (v) tells us that there exist no travelling wave solutions for $c < c^*$ which tend to $0$ as $\xi \to +\infty$ and are positive as $\xi \to -\infty$.

Note that part (iv) does not tell us much about the form of minimal-speed waves other than they are bounded between $0$ and $k^+$ for all $\xi \in \mathbb{R}$, however we would expect that these solutions tend to $0$ as $\xi \to \infty$, and $k$ as $\xi \to -\infty$ (and in fact that non-minimal speed waves also tend to $k$ as $\xi \to -\infty$). It has since been shown by Girardin \cite{girardin} that minimal-speed waves do indeed tend to $0$ as $\xi \to \infty$, although the question of their precise behaviour as $\xi \to -\infty$ remains open. Our expectations on the behaviour of minimal-speed solutions are also supported by the numerical simulations of Elliott and Cornell, which we have recreated in Figure \ref{twavesol} using a finite difference method with a Heavyside initial condition for each component. Provided that the long-time behaviour from such initial conditions is indeed as one would expect, close to the minimal-speed travelling wave, the simulations suggest that the form of this wave has a non-monotone component. Note that this is consistent with the work of Griette and Raoul \cite{griette} who consider travelling wave solutions of a similar system in which both phenotypes disperse at the same rate, $D_e = D_d$. Griette and Raoul exploit this fact in order to describe the form of travelling wave solutions of their system, finding a similar non-monotone behaviour to the one seen in Figure \ref{twavesol}. Another natural question is, how does the travelling wave behave near $-\infty$, and in particular, what is the relationship between the co-existence equilibrium and the limit at $- \infty$, if it exists, of travelling wave solutions? These are non-trivial, interesting questions which we do not focus on here and instead leave for future work.

\begin{figure}[ht]
\begin{center}
\includegraphics[scale=0.5]{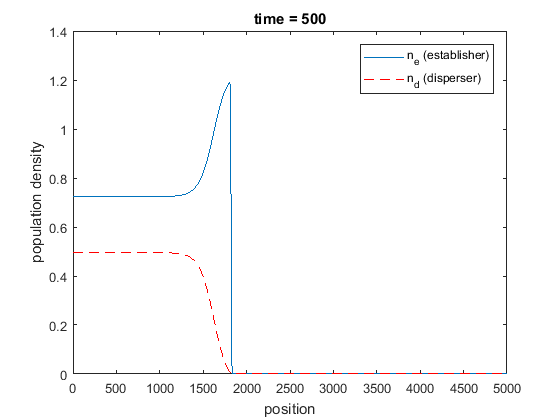}
\end{center}
\caption{Solution of the system \eqref{elliott} with parameter values: $D_e = 0.3$, $D_d = 1.5$, $r_e = 1.1$, $r_d = 0.2$, $m_{ee} = 1.0/1.2$, $m_{dd} = 1.0$, $m_{ed} = 0.8$, $m_{de} = 0.7$,  $\mu_e = 0.001$, $\mu_d = 0.00025$. A Heaviside step function was used as initial condition for each component.}
\label{twavesol}
\end{figure}

Note that the parallel work of Girardin \cite{girardin} develops a general linear determinacy result under certain conditions, which includes the case considered here. Here we present an alternative approach in the more biologically realistic case of small mutation rate, using the intuitive method of trapping $f$ between $f^+$ and $f^-$. In addition to being tools in the proof, $f^-$ and $f^+$ also give us some additional quantitative information on the behaviour of solutions, e.g., our method establishes $k^-$ as a lower bound for solutions at $-\infty$.


In Section 2 we verify that there only exist two non-negative equilibria for the system \eqref{elliott}: an unstable extinction equilibrium $(0,0)$ and a stable coexistence equilibrium $(n_e^*,n_d^*)$. In Section 3 it is proved that, under certain conditions on parameters, the spreading speed of the system is indeed linearly determinate, using Theorem \ref{wang}. In Section 4 we then use this result in order to analyse the dependence of the spreading speed on mutation and the behaviour of the two phenotypes in the leading edge of the travelling wave in the limit of small mutation rate.

\section{Equilibria of the system}

A much studied competition-diffusion system, similar to \eqref{elliott} but where there is no mutation between phenotypes and both intra-morph competition values equal one, is the Lotka-Volterra system of equations \cite{dancer,ni,roques,shigesada},
	\begin{equation}\label{lvlewis}
		\begin{aligned}
			\frac{\partial n_e}{\partial t} &= D_e \frac{\partial^2 n_e}{\partial x^2}+r_e n_e(1- n_e - m_{ed} n_d), \\
			\frac{\partial n_d}{\partial t} &= D_d \frac{\partial^2 n_d}{\partial x^2}+r_d n_d(1- n_d - m_{de} n_e).
		\end{aligned}
	\end{equation}
Lewis, Li and Weinberger \cite{lewis} note that a coexistence equilibrium for this system \eqref{lvlewis} exists if and only if $(1-m_{ed})(1-m_{de})>0$; that is, either when both $m_{ed} < 1$ and $m_{de}<1$, or both $m_{ed} > 1$ and $m_{de}>1$. Note that the case where $m_{ed}<1$ and $m_{de}<1$ corresponds to the condition \eqref{compe} in our system \eqref{elliott}.  A stability analysis shows that this coexistence equilibrium is stable when $m_{ed}<1$ and $m_{de}<1$, and unstable when $m_{ed}>1$ and $m_{de}>1$. It should also be noted that \cite{lewis, roques} study the case where one species invades the territory of another, while we and also \cite{elliott, griette}, are concerned with two morphs of a species invading a previously unoccupied territory.

\begin{figure}[ht]
\begin{center}
\includegraphics[scale=1.0]{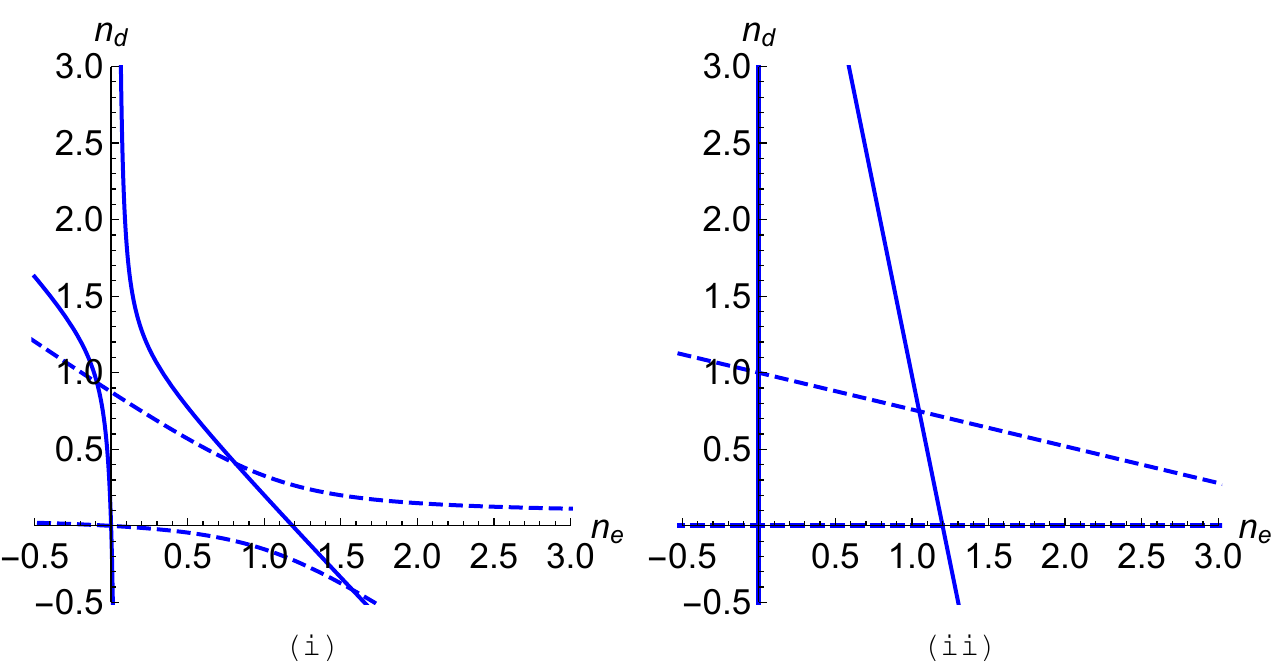}
\end{center}
\caption{(i) Nullclines of equation \eqref{f} and (ii) Nullclines of equation \eqref{g}. Parameter values $D_e = 0.3$, $D_d = 1.5$, $r_e = 1.1$, $r_d = 0.2$, $m_{ee} = 1.0/1.2$, $m_{dd} = 1.0$, $m_{ed} = 0.8$, $m_{de} = 0.7$, $\mu_e = 0.01$, $\mu_d = 0.025$. Each point at which the nullclines intersect represents an equilibrium. We can see that for this choice of parameters Equation \eqref{f} has only two non-negative equilibria, while \eqref{g} has four.}
\label{feq}
\end{figure}

For our system \eqref{elliott} we can easily see that there only exist two constant equilibria by plotting the nullclines of \eqref{f},
	\begin{align}
		r_e n_e (1 - m_{ee} n_e - m_{ed} n_d) -\mu_e n_e + \mu_d n_d &= 0,\label{nulle1}\\
		r_d n_d (1 - m_{de} n_e - m_{dd} n_d) -\mu_d n_d + \mu_e n_e &= 0,\label{nulld1}
	\end{align}
and observe where they intersect. The nullclines confirm for a specific choice of parameters that we only have two non-negative equilibria and that they are the extinction equilibrium and a single coexistence equilibrium (Figure \ref{feq} (i)). The nullclines appear as they do in Figure \ref{feq} (i) if the parameters satisfy the conditions
	\begin{equation}\label{asymproot}
		\frac{\mu_d}{r_e m_{ed}} < \frac{r_e - \mu_e}{r_e m_{ee}}, \,\,\,\,\,\,\,\, \frac{\mu_e}{r_d m_{de}} < \frac{r_d - \mu_d}{r_d m_{dd}},
	\end{equation}
which aligns with our assumption that the mutation is relatively small. We note that it is clearly also possible to deal with cases in which the mutation does not satisfy assumptions \eqref{asymproot}, however we are only interested in the case of small mutation here.

However, simply plotting the nullclines of our system does not tell us the stability of each equilibrium, we therefore consider a modified version of \eqref{f} without the mutation terms, which we call $g$,
\begin{equation}\label{g}
		g(n_e,n_d) = 
			\left( \begin{array}{c}
				r_e n_e (1 - m_{ee} n_e - m_{ed} n_d)\\
				r_d n_d (1 - m_{de} n_e - m_{dd} n_d)
			\end{array} \right).
\end{equation}
Due to the relative smallness of the mutation terms we can then introduce them as a perturbation before using the implicit function theorem.

First we evaluate the equilibria of $g$. We can easily see that there are four equilibria by plotting the nullclines,
	\begin{align}
		r_e n_e (1 - m_{ee} n_e - m_{ed} n_d) &= 0,\label{nulle2}\\
		r_d n_d (1 - m_{de} n_e - m_{dd} n_d) &= 0,\label{nulld2}
	\end{align}
which we do in Figure \ref{feq} (ii) for certain parameters. The equilibria of \eqref{g} consist of an extinction equilibrium $(0,0)$, two equilibria on the axes where one phenotype is present while the other is extinct, $\left(1/m_{ee},0\right)$, $\left(0,1/m_{dd}\right)$, and a coexistence equilibrium
	\begin{equation*}\label{coex}
		\left( \dfrac{m_{dd}-m_{ed}}{m_{ee}m_{dd}-m_{ed}m_{de}},\dfrac{m_{ee}-m_{de}}{m_{ee}m_{dd}-m_{ed}m_{de}} \right)
	\end{equation*}
which we refer to as $(n^*_e, n^*_d)$ for simplicity. Note that $(n^*_e, n^*_d)$ is a coexistence equilibria due to the condition \eqref{compe} specified earlier.

The Jacobian of \eqref{g} is
	\begin{equation}\label{gjacog}
		J_g(n_e, n_d) = 
			\left( \begin{array}{cc}
				r_e(1 - 2 m_{ee} n_e - m_{ed} n_d) &  - r_e m_{ed} n_e\\
				- r_d m_{de} n_d & r_d(1 - m_{de} n_e -2 m_{dd} n_d)
			\end{array} \right).
	\end{equation}
Substituting in values of $n_e$ and $n_d$ at each of the equilibria to the trace and determinant of \eqref{gjacog} we see that the equilibrium $(n_e^*, n_d^*)$ is stable, while the other three are unstable. Note also that the determinant of \eqref{gjacog} is non-zero when evaluated at each of the equilibria of \eqref{g}.

We now use the implicit function theorem \cite{apostol} to determine how each equilibrium moves when mutation is introduced to the system \eqref{g} as a perturbation. To do so, we suppose that there exists $\mu > 0$ such that
	\begin{equation}\label{ift}
		f(n_e,n_d) = g(n_e,n_d) +\mu M \left( \begin{array}{c} n_e\\n_d \end{array} \right)
	\end{equation}
where $g$ is defined in \eqref{g} above, $\mu$ is a non-negative scalar parameter which we use to vary the mutation and $M$ is the matrix of mutation coefficients
	\begin{equation}
		M =
			\left(\begin{array}{c c}
				-e & d\\
				e & -d
			\end{array}\right).
	\end{equation}
Clearly \eqref{ift} is a special case of \eqref{f} with $\mu_e = \mu e$ and $\mu_d = \mu d$.

The equilibria for our original system \eqref{elliott} satisfy $f(n_e,n_d)=0$, where $f$ is the nonlinearity \eqref{f}, so that
	\begin{equation}\label{ift2}
		g(n_e,n_d) +\mu M \left( \begin{array}{c} n_e\\n_d \end{array} \right) = 0.
	\end{equation}
As a consequence of the implicit function theorem, in a neighbourhood of $\mu = 0$ and an equilibrium $(\bar{n}_e, \bar{n}_d)$ of $g$, there is a unique solution of \eqref{ift2} which is a continuously differentiable function of $\mu$, say $h_{(\bar{n}_e,\bar{n}_d)}(\mu)$. We can thus differentiate \eqref{ift2} in order to obtain an expression describing how an equilibrium $(\bar{n}_e, \bar{n}_d)^T$ is perturbed upon the introduction of mutation $\mu$. Since the determinant of the Jacobian matrix $J_g$ is not equal to zero at any of the equilibria, we may invert $J_g$ and obtain the expression
	\begin{equation}\label{bigt}
		\Theta (\bar{n}_e, \bar{n}_d) := \frac{\mathrm{d}}{\mathrm{d}\mu}h_{(\bar{n}_e,\bar{n}_d)}(\mu)\bigg\rvert_{\mu=0}  = - 		J_g(\bar{n}_e,\bar{n}_d)^{-1} M \left( \begin{array}{c} \bar{n}_e\\ \bar{n}_d \end{array} \right).
	\end{equation}
Clearly the extinction equilibrium $(0,0)$ remains at $(0,0)$, and the implicit function theorem ensures the local uniqueness of this equilibrium for small $\mu > 0$. Evaluating \eqref{bigt} at each of the other equilibria of $g$, we see that the equilibrium $\left(1/m_{ee},0\right)$ is perturbed into the lower right quadrant, because
	\begin{equation}
		\Theta \left(\frac{1}{m_{ee}},0\right)
		= \frac{\mu_e}{r_e r_d (m_{ee}-m_{de})} \left(\frac{r_e m_{ed}}{m_{ee}} - \frac{r_d}{m_{ee}}(m_{ee}-m_{de}), -r_e \right)^T.
	\end{equation}
Note that the term $(m_{ee}-m_{de})$ is positive due to the condition \eqref{compe}. Similarly the equilibrium $\left(0,1/m_{dd}\right)$ is perturbed into the upper left quadrant, since
	\begin{equation}
		\Theta \left(0,\frac{1}{m_{dd}}\right)
		= \frac{\mu_d}{r_e r_d (m_{dd}-m_{ed})} \left(-r_d, \frac{r_d m_{de}}{m_{dd}} - \frac{r_e}{m_{dd}}(m_{dd}-m_{ed}) \right)^T.
	\end{equation}
Finally, the coexistence equilibrium is perturbed a small amount in a direction which is dependant on the parameters of the system,
	\begin{equation}
		\Theta \left(n_e^*,n_d^*\right) =
			\left(\begin{array}{c}
				r_e n^*_e n^*_d \left[\mu_e  (m_{dd}-m_{ed})- \mu_d(m_{ee}-m_{de})\right]\\
				r_d n^*_e n^*_d \left[\mu_d  (m_{ee}-m_{dd})- \mu_e(m_{dd}-m_{ed})\right]
			\end{array}\right).
	\end{equation}

Moreover, since the Jacobian is a continuous function of $\mu$, we know that for small $\mu \neq 0$, the stability of each of the equilibria remains the same as when $\mu=0$. Therefore by introducing a small amount of mutation to our system we are left with two non-negative equilibria: an unstable extinction state $(0,0)$ and a stable coexistence state $(n^*_e, n^*_d)$.

\section{Proof of linear determinacy}

Before we apply Theorem \ref{wang}, we have some preparation to do in showing that the system \eqref{elliott} satisfies Hypotheses 1-3 for a certain choice of parameters. Firstly we construct an invariant set for \eqref{general} with the nonlinearity $f$ in \eqref{f} by using the comparison principle, Theorem \ref{comparison}. Suppose that there exist $f^+$ and $f^-$ such that $f^-(u) \leq f(u) \leq f^+(u)$ for $u \in \mathcal{C}_{k^+}$, and that $f^+$ and $f^-$ have positive equilibria at $k^+$ and $k^-$ respectively, as well as an equilibrium at $0$. Let $u(x,t)$ be a solution of \eqref{elliott} with initial condition $u_0 \in \mathcal{C}_{k^+}$. Given these conditions, the following inequalities hold,
	\begin{align}
		k^+_t - A k^+_{xx} - f^+ (k^+) = 0 = u_t - A u_{xx} - f(u) \geq u_t - A u_{xx} - f^+ (u),\\
		0 - A 0 - f^- (0) = 0 = u_t - A u_{xx} - f(u) \leq u_t - A u_{xx} - f^- (u).
	\end{align}
The comparison principle, Theorem \ref{comparisonPrin}, therefore implies that
	\begin{equation}
		0 \leq u(x,t) \leq k^+, \,\,\,\, \mathrm{for}\,\, x \in \mathbb{R},\,\,t>0,
	\end{equation}
which says that $\mathcal{C}_{k^+}$ an invariant set of the system \eqref{elliott}.

Hypothesis 2 requires that the first diagonal block of the matrix $H_{\beta} = \beta A + \beta^{-1}f'(0)$ has a Perron-Frobenius eigenvalue that is strictly larger than the Perron-Frobenius eigenvalue of all other irreducible diagonal blocks. For the system \eqref{elliott} we have
\begin{equation}
H_\beta =
\left(
\begin{array}{c c}
\beta D_e + \beta^{-1}(r_e - \mu_e) & \beta^{-1}\mu_d \\
\beta^{-1}\mu_e & \beta D_d + \beta^{-1}(r_d - \mu_d)
\end{array}
\right).
\end{equation}
The off-diagonal elements of $H_\beta$ are strictly positive, and thus $H_\beta$ is itself irreducible. This matrix always has at least one positive eigenvalue for all values of $\mu_e$ and $\mu_d$ \cite{aled}. Hypothesis 2 is therefore satisfied due to the existence of only one irreducible diagonal block, and the matrix $H_\beta$ has a Perron-Frobenius eigenvalue which is strictly positive.

We will now describe a possible choice for $f^+$ and $f^-$ satisfying Hypothesis 1 and 3 which will show that we can apply Theorem \ref{wang}, and hence that the system of equations \eqref{elliott} is linearly determinate.

\subsection{Upper bound}

We begin by looking for an upper bound $f^+$ which is also cooperative. If we examine the Jacobian matrix \eqref{Jf} of $f$, we see that it is the terms containing inter-species competition which cause the system to become non-cooperative as the population density increases. Therefore we consider a function $f^+$ which is similar to $f$ but with $m_{ed} = m_{de} = 0$, namely
\begin{equation}\label{fpl}
f^+(n_e,n_d) = 
\left( \begin{array}{c}
r_e n_e (1 - m_{ee} n_e) - \mu_e n_e + \mu_d n_d \\
r_d n_d (1 - m_{dd} n_d) + \mu_e n_e - \mu_d n_d
\end{array} \right).
\end{equation}
We now check that this function satisfies Hypotheses 1 and 3.

\begin{proposition}
Suppose that $\mu_e < r_e$ and $\mu_d < r_d$. Then the function $f^+$ in \eqref{fpl} satisfies Hypotheses 1 and 3.
\end{proposition}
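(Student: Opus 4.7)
The plan is to verify all five parts of Hypothesis 1 together with Hypothesis 3 for the explicit $f^+$ in \eqref{fpl}; several parts reduce to direct calculation. Part (iv) is immediate from the Jacobian of $f^+$, whose off-diagonal entries are the constants $\mu_d,\mu_e>0$, so $f^+$ is cooperative on all of $\mathbb{R}^2$. Part (v) is a one-line check: $f^+-f$ has components $r_e m_{ed} n_e n_d$ and $r_d m_{de} n_e n_d$, which vanish to first order at the origin. The same formula gives $f(u)\le f^+(u)$ on $\{u\ge 0\}$, which is the upper half of (ii). Invariance of $\mathcal{C}_{k^+}$ in (iii) then follows from the comparison argument given immediately before the proposition, once $k^+$ has been produced.

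The main work is thus (a) constructing a unique positive equilibrium $k^+$ of $f^+$, and (b) proving $k^+\ge k$. For (a), I would first note that on the axes the only zero of $f^+$ is the origin: $n_d=0$ forces $\mu_e n_e=0$ via the second component, and similarly on the other axis. In the interior I would parameterise the first nullcline as $n_d=F(n_e):=n_e[r_e m_{ee} n_e-(r_e-\mu_e)]/\mu_d$, a convex parabola vanishing at $0$ and at $(r_e-\mu_e)/(r_e m_{ee})>0$ (using $\mu_e<r_e$), and the second nullcline as the graph of $n_d=G(n_e)$, where $G$ is the inverse on $\{n_d\ge (r_d-\mu_d)/(r_d m_{dd})\}$ of the strictly convex, increasing map $\phi_2(n_d):=n_d[r_d m_{dd} n_d-(r_d-\mu_d)]/\mu_e$, so that $G$ is concave and increasing with $G(0)=(r_d-\mu_d)/(r_d m_{dd})>0$ (using $\mu_d<r_d$). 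On $[(r_e-\mu_e)/(r_e m_{ee}),\infty)$ the difference $F-G$ is convex, strictly negative at the left endpoint, and tends to $+\infty$ as $n_e\to\infty$ because $F$ grows quadratically while $G$ grows only like $\sqrt{n_e}$. A convex function that changes sign on an interval has exactly one zero, which yields the unique positive equilibrium $k^+$.

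For (b), I would study the ODE $\dot u=f^+(u)$ with $u(0)=k$, the coexistence equilibrium of $f$. Since $f(k)=0$, we have $\dot u(0)=f^+(k)-f(k)=(r_e m_{ed} k_e k_d,\, r_d m_{de} k_e k_d)^T>0$. Cooperativity of $f^+$ propagates the inequality $\dot u(t)\ge 0$ forward in time, so $u(t)$ is componentwise non-decreasing. Boundedness follows from a one-line Lyapunov estimate: $V(u)=u_1+u_2$ satisfies $\dot V<0$ outside a compact set thanks to the $-r_e m_{ee} u_1^2-r_d m_{dd} u_2^2$ terms, so the monotone bounded trajectory converges to a point $u^\ast\ge k>0$, necessarily an equilibrium of $f^+$. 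By (a), the only positive equilibrium is $k^+$, hence $k^+=u^\ast\ge k$.

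Hypothesis 3 for $f^+$ is then immediate: writing $q_\beta=(q_1,q_2)^T$, a direct expansion gives $f^+(\alpha q_\beta)-\alpha f'(0)q_\beta=-\alpha^2(r_e m_{ee} q_1^2,\, r_d m_{dd} q_2^2)^T\le 0$. I expect the uniqueness of $k^+$ in step (a) to be the principal subtlety, since removing the inter-morph competition a priori leaves room for the two nullclines to intersect more than once; the convexity/concavity trade-off between the two branches is the key device that prevents this, and is also what fails if one tries to apply the same strategy to the full $f$ without replacing the competition terms.
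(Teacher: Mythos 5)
Your proposal is correct and follows the same overall skeleton as the paper's proof (cooperativity from the Jacobian \eqref{Jfplus}, agreement of the linearisations at the origin, nullcline analysis for $k^+$, and the direct computation $f^+(\alpha q)-\alpha f'(0)q=-\alpha^2\left(r_e m_{ee}q_1^2,\,r_d m_{dd}q_2^2\right)^T\le 0$ for Hypothesis 3), but it differs in the two places where the paper is least explicit, and in both cases your argument is sharper. For uniqueness of $k^+$, the paper observes that each of \eqref{nulle3}, \eqref{nulld3} defines one density as a quadratic in the other with a single positive root under $\mu_e<r_e$, $\mu_d<r_d$, and reads off the existence and uniqueness of the positive intersection from the quadratic shape of the nullclines (illustrated in Figure \ref{FPLUS}); your convex-minus-concave argument for $F-G$, together with the observation that a convex function negative at the left endpoint and tending to $+\infty$ has exactly one zero, turns that geometric picture into a self-contained proof and correctly identifies where the same strategy would break down for the full $f$. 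For the ordering $k\le k^+$, the paper compares both equilibria to the intermediate point \eqref{nullclroots} given by the positive roots of the nullcline parabolas, via the two inequalities \eqref{equkp} and \eqref{equk}; you instead flow $k$ forward under the cooperative system $\dot u=f^+(u)$, using $f^+(k)=(r_e m_{ed}k_ek_d,\,r_d m_{de}k_ek_d)^T>0$, monotonicity of the trajectory, and a Lyapunov bound to force convergence to the unique positive equilibrium $k^+$ from below. The paper's route is more elementary and gives the slightly stronger sandwich $k<\bigl(\tfrac{r_e-\mu_e}{r_e m_{ee}},\tfrac{r_d-\mu_d}{r_d m_{dd}}\bigr)<k^+$; your monotone-systems route avoids having to justify the inequality \eqref{equk} for the non-cooperative $f$, which the paper asserts from the nullcline picture rather than proving, so your version is arguably the more robust of the two.
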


\begin{proof}
The function $f^+$ bounds $f$ from above for any $n_e \geq 0$, $n_d \geq 0$. We verify that this new system is cooperative by checking that the off-diagonal terms of the Jacobian matrix,
\begin{equation}\label{Jfplus}
J_{f^+}(n_e, n_d) = 
\left( \begin{array}{cc}
r_e(1-2 m_{ee} n_e)-\mu_e &  \mu_d \\
\mu_e & r_d(1-2 m_{dd} n_d)-\mu_d
\end{array} \right) 
\end{equation}
are strictly positive, which is true since $\mu_e, \mu_d > 0$. Evaluating \eqref{Jfplus} at $(0,0)$ we obtain,
\begin{equation}\label{Jfplus0}
J_{f^+}(0,0) = 
\left( \begin{array}{cc}
r_e-\mu_e &  \mu_d \\
\mu_e & r_d-\mu_d
\end{array} \right),
\end{equation}
which, using \eqref{Jf}, is clearly the same as the Jacobian at $(0,0)$ for $f$.

\begin{figure}[ht]
\begin{center}
\includegraphics[scale=1]{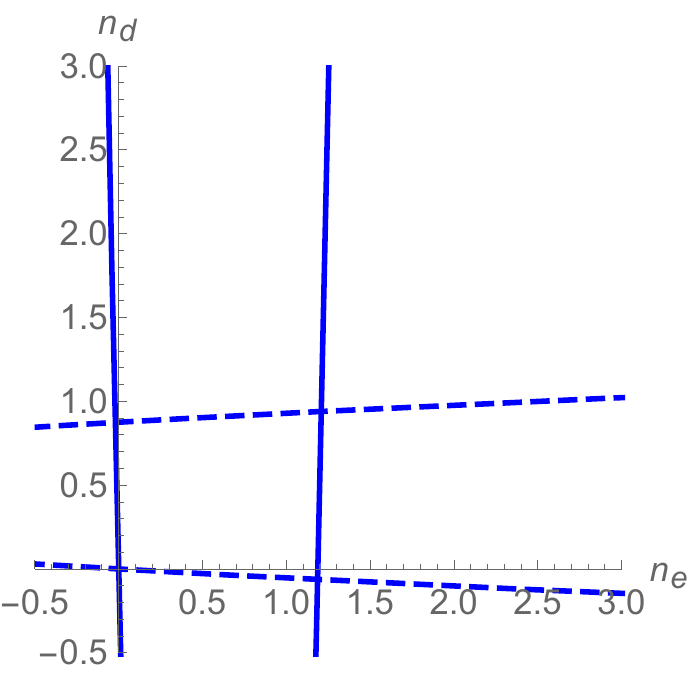}
\end{center}
\caption{Nullclines of $f^+(n_e,n_d)$ with parameter values $D_e = 0.3$, $D_d = 1.5$, $r_e = 1.1$, $r_d = 0.2$, $\mu_e = 0.01$, $\mu_d = 0.025$, $m_{ee} = 1.0/1.2$, $m_{dd} = 1.0$.}
\label{FPLUS}
\end{figure}

It is also required that $f^+$ has two non-negative equilibria, $0$ and $k^+$, with no other positive equilibria lying between the two. For this purpose consider the nullclines of function $f^+$,
\begin{align}
r_e n_e (1 - m_{ee} n_e) -\mu_e n_e + \mu_d n_d &= 0,\label{nulle3}\\
r_d n_d (1 - m_{dd} n_d) -\mu_d n_d + \mu_e n_e &= 0.\label{nulld3}
\end{align}
Both \eqref{nulle3} and \eqref{nulld3} define one density as a quadratic function of the other density. Both of these functions have one root at zero and if we impose the conditions $r_e > \mu_e$ and $r_d > \mu_d$ then the remaining root of each quadratic function will be positive, thus ensuring that they intersect in the positive quadrant and the existence of a unique positive equilibrium $k^+$.  This fits with our biologically realistic assumption that mutation rate is generally small in organisms. Figure \ref{FPLUS} illustrates the nullclines of $f^+$ for a specific choice of parameters.

We further require that the equilibrium $k^+$ of $f^+$ bounds the equilibrium $k$ of $f$ from above. The positive roots of the curves in Figure \ref{FPLUS} are
\begin{equation}\label{nullclroots}
\frac{r_e - \mu_e}{r_e m_{ee}}, \,\,\,\, \frac{r_d - \mu_d}{r_d m_{dd}},
\end{equation}
respectively. These are the same as the positive roots of the quadratic functions defined by \eqref{nulle1} and \eqref{nulld1}, which we saw earlier on the right-hand sides of the inequalities in \eqref{asymproot}. We know that the roots \eqref{nullclroots} occur at a lower population density than that of the coexistence equilibrium,
\begin{equation}\label{equkp}
\left(\frac{r_e - \mu_e}{r_e m_{ee}}, \frac{r_d - \mu_d}{r_d m_{dd}}\right) < (k^+_e,k^+_d) =: k^+,
\end{equation}
because of the quadratic nature of $f^+$, and we can see this in  Figure \ref{FPLUS}. However when we consider $f$ we see the opposite,
\begin{equation}\label{equk}
\left(\frac{r_e - \mu_e}{r_e m_{ee}}, \frac{r_d - \mu_d}{r_d m_{dd}}\right) > (k_e, k_d ) \,\,\,\, =: k.
\end{equation}
It follows from \eqref{equkp} and \eqref{equk} that
\begin{equation}
k < \left(\frac{r_e - \mu_e}{r_e m_{ee}}, \frac{r_d - \mu_d}{r_d m_{dd}}\right) < k^+,
\end{equation}
thus proving $k^+$ bounds $k$ from above.

For Hypothesis 3 we need to show that inequality \eqref{h3} holds for $f^+$ and a Perron-Frobenius eigenvector $\nu_\lambda$ of $f'(0)$. In fact with our chosen $f^+$, we can show that this inequality holds true for any positive vector $(n_e,n_d)^T$, with $n_e > 0$, $n_d > 0$, since
\begin{equation}
\alpha \left( \begin{array}{c}
r_e n_e (1- \alpha m_{ee} n_e) - \mu_e n_e + \mu_d n_d \\
r_d n_d (1- \alpha m_{dd} n_d) - \mu_d n_d + \mu_e n_e
\end{array}\right) \leq
\alpha \left( \begin{array}{c}
r_e n_e - \mu_e n_e + \mu_d n_d \\
r_d n_d - \mu_d n_d + \mu_e n_e
\end{array}\right).
\end{equation}
\end{proof}

\subsection{Lower bound}

We now require a function $f^-$ which bounds $f$ from below while satisfying Hypotheses 1 and 3. While finding a function $f^+$ was simply a case of setting some parameters equal to zero, we require a little more work to find a suitable choice of $f^-$. Ideally we need a function which behaves like $f$ at low population densities but does not have negative off diagonal elements in its Jacobian at higher population densities. We achieve this by replacing one of the population densities in the inter-species competition term with the functions $h_e$, $h_d$, so that
\begin{equation}\label{fmini}
f^-(n_e,n_d) = 
\left( \begin{array}{c}
r_e n_e (1 - m_{ee} n_e - m_{ed} h_d(n_e,n_d)) - \mu_e n_e + \mu_d n_d \\
r_d n_d (1 - m_{de} h_e(n_e,n_d) - m_{dd} n_d) + \mu_e n_e - \mu_d n_d
\end{array} \right),
\end{equation}
where
\begin{gather}
\begin{aligned}
h_d(n_e,n_d) &= \gamma_d(n_e)n_d + (1-\gamma_d(n_e))N_d\\
h_e(n_e,n_d) &= \gamma_e(n_d)n_e + (1-\gamma_e(n_d))N_e,
\end{aligned}
\end{gather}
the constant vector $(N_e, N_d)$ is an upper bound for $k^+$, and the cut-off functions $\gamma_d$, $\gamma_e \in \mathcal{C}^\infty [0,\infty)$ are defined by
\begin{gather}
\begin{aligned}
\gamma_d(n_e) &= \left\{
     \begin{array}{lr}
       1, & 0 \leq n_e \leq \tfrac{\mu_d}{4 r_e m_{ed}}\\
       \mathrm{smooth \,\, and \,\, decreasing,} & \tfrac{\mu_d}{4 r_e m_{ed}} \leq n_e \leq \tfrac{\mu_d}{2 r_e m_{ed}}\\
       0, & n_e \geq \tfrac{\mu_d}{2 r_e m_{ed}}
     \end{array}
   \right.\\
\gamma_e(n_d) &= \left\{
     \begin{array}{lr}
       1, & 0 \leq n_d \leq \tfrac{\mu_e}{4 r_d m_{de}}\\
       \mathrm{smooth \,\, and \,\, decreasing,} & \tfrac{\mu_e}{4 r_d m_{de}} \leq n_d \leq \tfrac{\mu_e}{2 r_d m_{de}}\\
       0, & n_d \geq \tfrac{\mu_e}{2 r_d m_{de}}
     \end{array}
   \right. .
\end{aligned}
\end{gather}
Thus when the population density $n_d$ is small, $h_d$ is simply $n_d$. However as $n_d$ approaches the range in which it would cause the system to become non-cooperative it is switched to a constant value $N_d > 0$. The function $h_e$ behaves similarly, switching from $n_e$ to a constant $N_e > 0$. This switching is controlled by the functions $\gamma_d$ and $\gamma_e$.

Since it is important that this $f^-$ bounds $f$ from below we need to choose the constants $N_e$ and $N_d$ to be sufficiently large. We look for upper bounds for $n_e$ and $n_d$ in the hope of using these as our choice for $N_e$ and $N_d$. Since $\mathcal{C}_{k^+}$ is an invariant set and we only consider initial data $u_0 \in \mathcal{C}_{k^+}$, it suffices to find an upper bound for $k^+$. Adding the nullclines \eqref{nulle3} and \eqref{nulld3} of $f^+$ yields
\begin{equation}
r_e n_e (1-m_{ee} n_e) + r_d n_d (1-m_{dd} n_d) = 0.
\end{equation}
From here a further rearrangement and some simple calculus gives us the inequality
\begin{equation}
r_d n_d (m_{dd} n_d - 1) = r_e n_e (1 - m_{ee} n_e) \leq \frac{r_e}{4m_{ee}}.
\end{equation}
Rearranging to obtain a quadratic inequality in $n_d$, and solving in the usual way, we obtain
\begin{equation}
n_d \leq \frac{1 + \sqrt{1+\frac{r_e m_{dd}}{r_d m_{ee}}}}{2 m_{dd}},
\end{equation}
which we use to define an upper bound for $n_d$, and similarly for $n_e$,
\begin{equation}
N_d := \frac{1 + \sqrt{1+\frac{r_e m_{dd}}{r_d m_{ee}}}}{2 m_{dd}}, \,\,\,\,\,\, N_e := \frac{1 + \sqrt{1+\frac{r_d m_{ee}}{r_e m_{dd}}}}{2 m_{ee}}.\label{bigne}
\end{equation}
Note that it only matters that $N_e$, $N_d$ bound $n_e$, $n_d$ from above respectively; \eqref{bigne} gives an example of such bounds, but we make no claims that these are sharp.

We now show that under some conditions on parameters, our choice of function $f^-$ in \eqref{fmini} satisfies Hypotheses 1 and 3.



\begin{proposition}
Suppose that
\begin{equation}
m_{ed} < \frac{1}{N_d}, \,\,\,\,\,\,\,\, m_{de} < \frac{1}{N_e},
\end{equation}
and
\begin{equation}
\begin{aligned}
\mu_e &< \min \left\{ \frac{r_e (1 - m_{ed} N_d)}{2}, \frac{r_d m_{de}(1-m_{de} N_e)}{m_{dd}} \right\},\\
\mu_d &< \min \left\{ \frac{r_d (1 - m_{de} N_e)}{2}, \frac{r_e m_{ed}(1-m_{ed}N_d)}{m_{ee}}  \right\}.
\end{aligned}
\end{equation}
Then the function $f^-$ in \eqref{fmini} satisfies Hypotheses 1 and 3.
\end{proposition}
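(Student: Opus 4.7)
The plan is to verify the clauses of Hypothesis 1 for $f^-$ one at a time, exploiting the fact that the cut-offs $\gamma_e,\gamma_d$ have been engineered so that each clause goes through under the stated smallness conditions, and then to establish the Hypothesis 3 inequality by a short algebraic computation.

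I would begin with the easy parts. The bound $f^-(u)\le f(u)$ on $[0,k^+]$ is immediate: since $N_e,N_d$ dominate the components of any element of $\mathcal{C}_{k^+}$, we have $h_d=\gamma_d n_d+(1-\gamma_d)N_d\ge n_d$ and $h_e\ge n_e$, and these enlarged values enter $f$ through negative competition coefficients. Because $\gamma_d\equiv\gamma_e\equiv 1$ on a neighbourhood of the origin, $f^-\equiv f$ there and Hypothesis 1(v) follows at once. For cooperativity (Hypothesis 1(iv)) I would compute the off-diagonal Jacobian entry
$$\frac{\partial f^-_1}{\partial n_d}=\mu_d-r_em_{ed}\,n_e\gamma_d(n_e);$$
on the support of $\gamma_d$ the definition forces $n_e\le \mu_d/(2r_em_{ed})$, so $r_em_{ed}n_e\gamma_d(n_e)\le \mu_d/2$ and the derivative is bounded below by $\mu_d/2>0$, while the mirror computation handles $\partial f^-_2/\partial n_e$. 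Invariance of $\mathcal{C}_{k^+}$ (Hypothesis 1(iii)) then follows from Theorem \ref{comparison} exactly as discussed at the start of this section.

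The crux is locating the positive equilibrium $k^-$. In the region $\mathcal{R}:=\{n_e>\mu_d/(2r_em_{ed}),\,n_d>\mu_e/(2r_dm_{de})\}$ both cut-offs vanish and $f^-$ collapses to
$$\tilde f(n_e,n_d)=\begin{pmatrix} r_e n_e(1-m_{ee}n_e-m_{ed}N_d)-\mu_e n_e+\mu_d n_d\\ r_d n_d(1-m_{de}N_e-m_{dd}n_d)+\mu_e n_e-\mu_d n_d\end{pmatrix},$$
which has the same structure as the $f^+$ of the previous subsection but with effective growth rates $\tilde r_e:=r_e(1-m_{ed}N_d)$ and $\tilde r_d:=r_d(1-m_{de}N_e)$. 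The conditions $m_{ed}<1/N_d$ and $m_{de}<1/N_e$ keep these positive, and the bounds $\mu_e<\tilde r_e/2$ and $\mu_d<\tilde r_d/2$ are precisely the analogue of the $\mu_e<r_e$, $\mu_d<r_d$ condition used for $f^+$, so the nullcline argument of the previous subsection applies verbatim to give $\tilde f$ a unique positive equilibrium $k^-$ satisfying
$$k^-_e>\frac{\tilde r_e-\mu_e}{\tilde r_em_{ee}}>\frac{1}{2m_{ee}},\qquad k^-_d>\frac{\tilde r_d-\mu_d}{\tilde r_dm_{dd}}>\frac{1}{2m_{dd}}.$$
The remaining mutation hypotheses $\mu_d<r_em_{ed}(1-m_{ed}N_d)/m_{ee}$ and $\mu_e<r_dm_{de}(1-m_{de}N_e)/m_{dd}$ translate to $\mu_d/(2r_em_{ed})<1/(2m_{ee})$ and $\mu_e/(2r_dm_{de})<1/(2m_{dd})$, forcing $k^-\in\mathcal{R}$, so $k^-$ is indeed an equilibrium of the full $f^-$. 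For the ordering $k^-\le k$ I would use the cooperative structure: $f^-(k)\le f(k)=0$ together with cooperativity of $f^-$ shows that the trajectory of $\dot u=f^-(u)$ with $u(0)=k$ is componentwise non-increasing, hence converges to an equilibrium $k_\ast\le k$; since $f'(0)$ has a positive Perron--Frobenius eigenvalue by Hypothesis 2, the zero equilibrium is linearly unstable, so $k_\ast$ is a positive equilibrium, and I argue $k_\ast=k^-$.

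Hypothesis 3 for $f^-$ then reduces to the identity
$$\alpha f'(0)q_\beta-f^-(\alpha q_\beta)=\begin{pmatrix} \alpha r_e q_{\beta,1}\bigl(\alpha m_{ee}q_{\beta,1}+m_{ed}h_d(\alpha q_\beta)\bigr)\\ \alpha r_d q_{\beta,2}\bigl(m_{de}h_e(\alpha q_\beta)+\alpha m_{dd}q_{\beta,2}\bigr)\end{pmatrix}\ge 0,$$
each component of which is non-negative since $\alpha,q_\beta>0$ and $h_e,h_d\ge 0$. The main technical obstacle to turning this plan into a full proof is excluding further positive equilibria of $f^-$ in the transitional strips where $\gamma_e$ or $\gamma_d$ lies strictly between $0$ and $1$; the cleanest route is a case-by-case nullcline analysis on the nine subregions of $[0,k^+]$ induced by the cut-offs, using the mutation-smallness assumptions to rule out spurious intersections. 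Note, however, that Hypothesis 1(ii) only demands the absence of positive equilibria strictly between $0$ and $k^-$, so the monotone flow from $k$ together with local uniqueness of $k^-$ inside $\mathcal{R}$ already gives the precise statement needed.
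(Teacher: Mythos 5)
Your overall architecture tracks the paper's proof closely: cooperativity from the cut-offs, identification of $k^-$ as the coexistence equilibrium of the ``frozen'' function (your $\tilde f$, the paper's $f^-_\star$ in \eqref{fstar}) on the region $\mathcal{R}$ where $\gamma_e=\gamma_d=0$, and a root-versus-switching-point comparison to place that equilibrium inside $\mathcal{R}$. Your direct computation of $\alpha f'(0)q_\beta-f^-(\alpha q_\beta)$ is a clean alternative to the paper's one-line deduction of Hypothesis 3 from $f^-\le f^+$, and the quantitative bound $\partial f^-_1/\partial n_d\ge\mu_d/2$ is correct. One algebraic slip: the positive root of the first nullcline of $\tilde f$ is $(\tilde r_e-\mu_e)/(r_e m_{ee})$, not $(\tilde r_e-\mu_e)/(\tilde r_e m_{ee})$; the chain that actually uses the stated hypotheses is
$$\frac{\tilde r_e-\mu_e}{r_e m_{ee}}>\frac{\tilde r_e}{2r_e m_{ee}}=\frac{1-m_{ed}N_d}{2m_{ee}}>\frac{\mu_d}{2r_e m_{ed}},$$
which is precisely the paper's condition \eqref{rootswitch}, so the conclusion survives the slip.

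The genuine gap is Hypothesis 1(ii), which you flag but do not close, and the shortcut you offer in its place does not work. The monotone flow started at $k$ produces \emph{some} equilibrium $k_*\le k$, but it says nothing about a hypothetical positive equilibrium $u^*$ with $0<u^*<k^-$ sitting in one of the transitional strips, and ``local uniqueness of $k^-$ inside $\mathcal{R}$'' cannot exclude equilibria outside $\mathcal{R}$; your dynamical-systems argument for $k^-\le k$ also quietly relies on the identification $k_*=k^-$, i.e.\ on the very uniqueness not yet established. The paper closes this in two lines without any case analysis on subregions: since $h_d\le N_d$ and $h_e\le N_e$, the inequality $f^-\ge f^-_\star$ holds on \emph{all} of $[0,k^+]$, so every zero of $f^-$ must lie where both components of $f^-_\star$ are nonpositive; that set is contained in the region beyond the positive roots of the nullclines of $f^-_\star$, which by \eqref{rootswitch} lies inside $\mathcal{R}$, where $f^-\equiv f^-_\star$ and the only coexistence equilibrium is $k^-$. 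The same containment applied to $f\ge f^-_\star$ is how the paper obtains $k^-\le k$. This global comparison with $f^-_\star$ is the key idea missing from your plan; without it, the exclusion of spurious equilibria remains an unexecuted (and, as proposed, laborious) step rather than a proof.
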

\begin{proof}
Defining $N_d$ and $N_e$ as in \eqref{bigne} ensures that $f^-$ bounds $f$ from below whenever $(n_e,n_d) \in [0,k^+]$. The off-diagonal elements of the Jacobian matrix of $f^-$ are
\begin{equation}\label{offdiags}
\frac{\partial f^-_1}{\partial n_d} = \mu_d - r_e m_{ed} n_e \gamma_d, \,\,\,\,\,\,\,\, \frac{\partial f^-_2}{\partial n_e} = \mu_e - r_d m_{de} n_d \gamma_e.
\end{equation}
As each population density increases, the functions $\gamma_d$ and $\gamma_e$ shrink the negative terms in \eqref{offdiags} until $\gamma_d (n_e) = 0$ at $n_e =  \mu_d / (2 r_e m_{ed})$, and $\gamma_e (n_d) = 0$ at $n_d = \mu_e / (2 r_d m_{de})$. Since the off-diagonal elements would normally become negative at $n_e =  \mu_d / (r_e m_{ed})$ and $n_d = \mu_e / (r_d m_{de})$ this ensures that they remain positive for all $(n_e, n_d)$. Thus the system of equations for $f^-$ is cooperative. For population densities such that $n_e <  \mu_d / (4 r_e m_{ed})$ and $n_d < \mu_e / (4 r_d m_{de})$ we have $\gamma_d (n_e) = \gamma_e (n_d) = 1$, and thus the Jacobian of $f^-$ is identical to \eqref{Jf}. Hence, in particular, both $f$ and $f^-$ have the same Jacobian at zero.

\begin{figure}[ht]
\begin{center}
\includegraphics[scale=1]{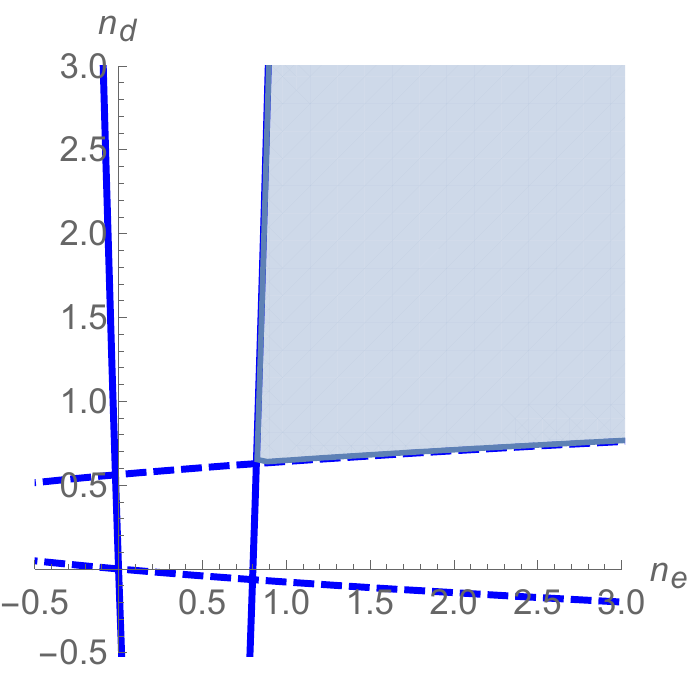}
\caption{Nullclines of $f^-_{\star}(n_e,n_d)$ with parameter values  $D_e = 0.3$, $D_d = 1.5$, $r_e = 1.1$, $r_d = 0.2$, $\mu_e = 0.01$, $\mu_d = 0.025$, $m_{ee} = 1.0/1.2$, $m_{dd} = 1.0$, $m_{ed} = 0.8$, $m_{de} = 0.7$}
\label{FMIN}
\end{center}
\end{figure}

If we replace $h_d$ and $h_e$ in $f^-(n_e, n_d)$ with $N_d$ and $N_e$ respectively, we obtain the function
\begin{equation}\label{fstar}
f^-_{\star}(n_e,n_d) = 
\left( \begin{array}{c}
r_e n_e (1 - m_{ee} n_e - m_{ed} N_d) - \mu_e n_e + \mu_d n_d \\
r_d n_d (1 - m_{de} N_e - m_{dd} n_d) + \mu_e n_e - \mu_d n_d
\end{array} \right),
\end{equation}
the nullclines of which we plot in Figure \ref{FMIN}. A coexistence equilibrium of \eqref{fstar} exists if we impose the conditions
\begin{equation}\label{condit}
m_{ed} < \frac{r_e - \mu_e}{r_e N_d}, \,\,\,\,\,\,\,\, m_{de} < \frac{r_d - \mu_d}{r_d N_e}
\end{equation}
on the smallness of the inter-morph competition. However we do not know a priori if $f^-$ has the form $f^-_\star$ for the range of values $(n_e, n_d)$ where this coexistence equilibrium occurs. This is also a coexistence equilibrium of $f^-$ if the non-zero roots of the quadratic functions defined by each of the nullclines of $f^-_\star$ are greater than the points at which $f^-$ switches to $f^-_\star$, which in the case of the first component means that,
\begin{equation}\label{rootswitch}
\frac{r_e - \mu_e - r_e m_{ed} N_d}{r_e m_{ee}} > \frac{\mu_d}{2 r_e m_{ed}}.
\end{equation}
Since the right hand side of \eqref{rootswitch} consists only of positive constants, we require that the numerator of the left hand side is positive, and thus \eqref{condit} is implied by \eqref{rootswitch}. Rearranging \eqref{rootswitch} gives the equivalent condition,
\begin{equation}\label{recondsw}
r_e - \mu_e - r_e m_{ed} N_d - \frac{m_{ee} \mu_d}{2 m_{ed}} > 0,
\end{equation}
which is satisfied, for instance, by imposing the conditions
\begin{equation}\label{cond2}
m_{ed} < \frac{1}{N_d}, \,\,\,\, \mu_e < \frac{r_e (1 - m_{ed} N_d)}{2}, \,\,\,\, \mu_d < \frac{r_e m_{ed}(1-m_{ed}N_d)}{m_{ee}}.
\end{equation}
Similarly, from the second component of \eqref{fstar} we obtain the conditions
\begin{equation}\label{cond3}
m_{de} < \frac{1}{N_e}, \,\,\,\, \mu_d < \frac{r_d (1 - m_{de} N_e)}{2}, \,\,\,\, \mu_e < \frac{r_d m_{de}(1-m_{de} N_e)}{m_{dd}}.
\end{equation}
Thus provided that the inter-morph competition satisfies
\begin{equation}\label{eqconda}
m_{ed} < \frac{1}{N_d}, \,\,\,\, m_{de} < \frac{1}{N_e},
\end{equation}
and that the mutation satisfies
\begin{equation}\label{eqcondb}
\begin{aligned}
\mu_e &< \min \left\{ \frac{r_e (1 - m_{ed} N_d)}{2}, \frac{r_d m_{de}(1-m_{de} N_e)}{m_{dd}} \right\},\\
\mu_d &< \min \left\{ \frac{r_d (1 - m_{de} N_e)}{2}, \frac{r_e m_{ed}(1-m_{ed}N_d)}{m_{ee}}  \right\},
\end{aligned}
\end{equation}
the coexistence equilibrium of $f^-_\star$ is also a coexistence equilibrium of $f^-$.  It now remains to be shown that there exists no other coexistence equilibrium for $f^-$. In Figure \ref{FMIN} we plot the nullclines of $f^-_\star$,
\begin{align}
r_e n_e (1 - m_{ee} n_e - m_{ed} N_d) - \mu_e n_e + \mu_d n_d &= 0, \label{nullstar1}\\
r_d n_d (1 - m_{de} N_e - m_{dd} n_d) + \mu_e n_e - \mu_d n_d &= 0, \label{nullstar2}
\end{align}
for parameters satisfying conditions \eqref{eqconda} and \eqref{eqcondb}. Since \eqref{nullstar1} defines $n_d$ as a quadratic function of $n_e$ we know that $f^-_{\star_1} > 0$ in the region above this curve. Similarly, to the right of the curve \eqref{nullstar2} we have $f^-_{\star_2} > 0$. We therefore must have that any other possible coexistence equilibria of $f^-$ must lie in the shaded area of Figure \ref{FMIN} since $f^-_\star < f^-$. However, for this range of population densities, we know that $f^-$ is the same as $f^-_\star$, and thus no further coexistence equilibria can occur for $f^-$. Since $f^-_\star \leq f^- \leq f$ for $(n_e, n_d) \in [0, k^+]$ we know that any coexistence equilibria of $f$ must also be situated in the shaded region of Figure \ref{FMIN}, and thus $k^- < k$. Hypothesis 3 for $f^-$ simply follows from the fact that $f^- \leq f \leq f^+$ whenever $(n_e,n_d)\in [0,k^+]$.
\end{proof}

We now gather our assumptions and summarise our result with the following theorem.

\begin{theorem}\label{ldsummary}
Suppose that
\begin{equation}\label{intersmall}
m_{ed} < \frac{1}{N_d}, \,\,\,\,\,\,\,\, m_{de} < \frac{1}{N_e},
\end{equation}
and
\begin{equation}\label{musmall}
\begin{aligned}
\mu_e &< \min \left\{ \frac{r_e (1 - m_{ed} N_d)}{2}, \frac{r_d m_{de}(1-m_{de} N_e)}{m_{dd}} \right\},\\
\mu_d &< \min \left\{ \frac{r_d (1 - m_{de} N_e)}{2}, \frac{r_e m_{ed}(1-m_{ed}N_d)}{m_{ee}}  \right\}.
\end{aligned}
\end{equation}
Then system \eqref{general} with $A$ and $f$ given by \eqref{A} and \eqref{f} respectively, satisfies Theorem \ref{wang}.
\end{theorem}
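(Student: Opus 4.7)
The plan is to verify that under the smallness conditions \eqref{intersmall} and \eqref{musmall}, each of Hypotheses 1, 2 and 3 holds for the system \eqref{general} with nonlinearity \eqref{f}, whereupon the conclusion follows by direct application of Theorem \ref{wang}. The substantive work has already been carried out in the two propositions of this section together with the invariant-set construction at the beginning of Section 3 and the equilibrium analysis of Section 2, so the present theorem is essentially an assembly step.

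First I would reconcile the hypotheses of the two propositions. The proposition on $f^+$ requires $\mu_e < r_e$ and $\mu_d < r_d$, which is weaker than what is assumed here: by \eqref{intersmall} we have $1 - m_{ed} N_d, \, 1 - m_{de} N_e \in (0,1)$, so the first entries in the minima of \eqref{musmall} are bounded above by $r_e/2$ and $r_d/2$, yielding $\mu_e < r_e/2 < r_e$ and $\mu_d < r_d/2 < r_d$. The proposition on $f^-$ assumes exactly \eqref{intersmall}-\eqref{musmall}. Hence both propositions apply and furnish cooperative upper and lower bounds $f^+$ and $f^-$ with positive equilibria satisfying $k^- \leq k \leq k^+$, matching the Jacobian of $f$ at the origin, obeying $f^- \leq f \leq f^+$ on $[0,k^+]$, and each satisfying the Hadeler-type inequality \eqref{h3}.

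For Hypothesis 1 the remaining pieces are: the statement that $f(0) = f(k) = 0$ with no other positive equilibria between them and finitely many equilibria overall, which is the content of Section 2 (whose requirement \eqref{asymproot} is a consequence of \eqref{musmall}); and invariance of $\mathcal{C}_{k^+}$ under \eqref{elliott}, which follows from the comparison principle Theorem \ref{comparisonPrin} applied with constant sub- and supersolutions $0$ and $k^+$, exactly as carried out at the start of Section 3. Hypothesis 2 was verified separately for our system: since $\beta^{-1}\mu_d, \, \beta^{-1}\mu_e > 0$, the $2 \times 2$ matrix $H_\beta$ is itself a single irreducible block with non-negative off-diagonal entries, so Theorem \ref{PerronF} produces a positive Perron-Frobenius eigenvalue with positive eigenvector depending continuously on $\beta > 0$. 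Hypothesis 3 is exactly the final clause of each proposition.

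The only thing requiring any real thought — hardly an obstacle — is the simultaneous compatibility of the two propositions, resolved by the observation above that \eqref{musmall} implies the weaker condition $\mu_e < r_e$, $\mu_d < r_d$ needed for $f^+$. With all three hypotheses in place, Theorem \ref{wang} applies to \eqref{general} with $A$ and $f$ as in \eqref{A} and \eqref{f}, delivering the linear determinacy of the spreading speed $c^* = \inf_{\beta > 0} \eta_\beta$ together with the travelling wave existence statements (i)-(v) of that theorem.
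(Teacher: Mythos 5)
Your proposal is correct and is essentially identical to the paper's treatment: the paper offers no separate proof of this theorem, presenting it as a summary whose justification is exactly the assembly you describe (the two propositions for $f^+$ and $f^-$, the comparison-principle invariance of $\mathcal{C}_{k^+}$, the irreducibility of $H_\beta$ for Hypothesis 2, and the Section 2 equilibrium analysis), and your observation that \eqref{intersmall}--\eqref{musmall} imply the weaker requirement $\mu_e < r_e$, $\mu_d < r_d$ of the $f^+$ proposition is the one compatibility check worth recording. The only inaccuracy is your parenthetical claim that \eqref{asymproot} is a consequence of \eqref{musmall} --- it is not in general (one would additionally need roughly $m_{ed}N_d \geq 1/3$) --- but this is immaterial to the structure of the argument, since the paper's two-equilibria conclusion in Section 2 rests on the implicit function theorem for small mutation rather than on that implication.
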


Theorem \ref{ldsummary} tells us that under the biologically realistic condition that the mutation and inter-morph competition are sufficiently small, Theorem \ref{wang} is satisfied and yields the existence of a minimal travelling wave speed $c^*$, the existence of travelling waves for $c \geq c^*$, as well as linear determinacy of $\eqref{elliott}$. While conditions \eqref{intersmall} and \eqref{musmall} are sufficient for linear determinacy, we in fact only require that \eqref{rootswitch} holds. Note that we neither claim, nor expect, that these conditions are sharp; they ensure linear determinacy in the ecologically important and biologically realistic small mutation rate case using the intuitive method of trapping $f$ between $f^-$ and $f^+$.

\section{Consequences of linear determinacy}

We now use our result on linear determinacy to investigate the behaviour of travelling wave solutions as a function of the mutation parameter $\mu$ when $f$ has the special form \eqref{ift}. This is biologically very interesting as, whilst mutation rates are kept low across organisms by natural selection, the rate can still vary by order of magnitude \cite{lynch}. We will give results on the speed of minimal-speed travelling waves as a function of mutation, as well as the composition of the leading edge of minimal-speed travelling-wave solutions of the linearised system \eqref{lin0}.

Note that Theorem \ref{wang} only rigorously establishes that $\lim_{\xi \rightarrow \infty} w (\xi)=0$ when $w(x-ct)$ is a non-minimal speed travelling-wave solution of \eqref{elliott}, and that solutions $w>0$ of \eqref{ode2o} with $\lim_{\xi \rightarrow \infty} w(\xi) = 0$ are related to the Perron-Frobenius eigenvector $q_\beta$ and corresponding eigenvalue $c$ of the matrix $H_\beta$ defined in \eqref{hlambda}\cite[Page 163, Lemma 2.4]{volpert3}. In fact, though, as mentioned in the introduction, Girardin \cite{girardin} has recently shown that $\lim_{\xi \rightarrow \infty} w (\xi)=0$ does still hold when $c=c^*$, and in \cite{girardin2} further characterises the behaviour of $w$ in terms of the Perron-Frobenius eigenvector $q_\beta$ corresponding to the minimal speed Perron-Frobenius eigenvalue $c^*$. This complements and gives additional weight to our study here of $c^*$, $\beta$ and $q_\beta$ as functions of the mutation rate $\mu$.

\subsection{Spreading speed, $c^*$}

We consider the special case of equation \eqref{ode2o} where $f(w) = g(w) + \mu M w$, namely
\begin{equation}\label{ode}
A w'' + c w' + g(w) + \mu M w = 0.
\end{equation}
In Section 3 we showed that under certain conditions on parameters, the spreading speed of \eqref{ode} is linearly determinate. Linearizing \eqref{ode} about the $(0,0)$ equilibrium and substituting the ansatz $w(\xi) = e^{- \beta \xi} q$, where $\beta > 0$ is real and $q > 0$ is a positive vector, we obtain the eigenvalue problem
\begin{equation}\label{evproblemo}
\frac{1}{\beta}(\beta^2 A +  g'(0) + \mu M)q = c q,
\end{equation}
which is a special case of \eqref{eig22}. We define the matrix $H_{\beta, \mu} := \beta A +  \beta^{-1}( g'(0) + \mu M)$. Then the eigenvalue $c$ of $H_{\beta,\mu}$ is the travelling wave speed of the profile $w(\xi) = e^{- \beta \xi} q$, while the eigenvector $q$ gives the composition of the leading edge of the travelling wave. We are interested in non-negative eigenvectors of $H_{\beta, \mu}$ because densities $n_e$ and $n_d$ are non-negative, and positive eigenvectors correspond to the case when both morphs are present together.

\begin{figure}[ht]
\begin{center}
\includegraphics[scale=1.0]{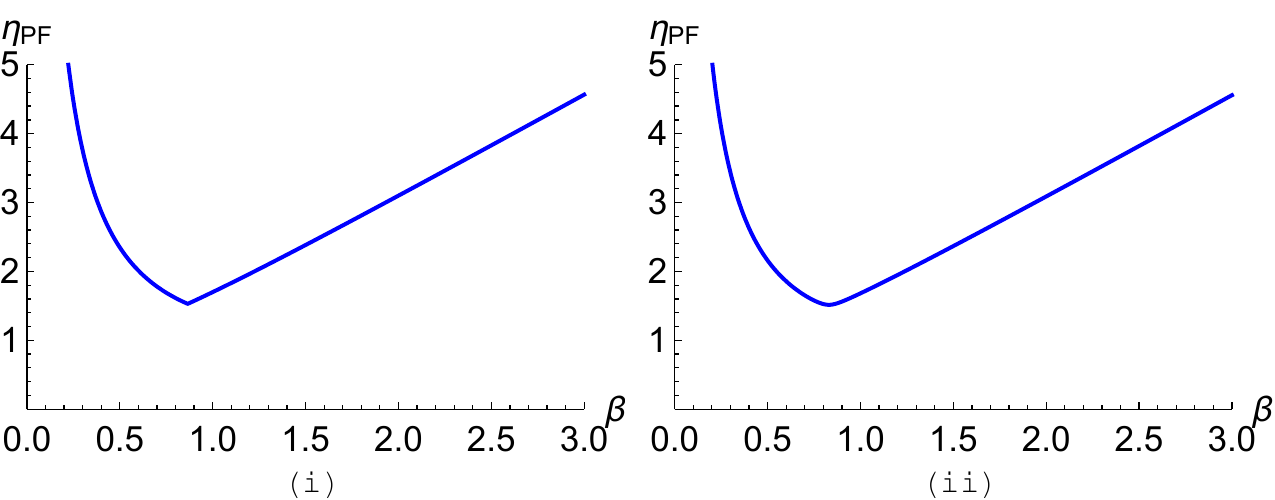}
\end{center}
\caption{(i) Perron Frobenius eigenvalue of $H_\beta$, which exists when $\mu > 0$. Parameters used are $D_e = 0.3$, $D_d = 1.5$, $r_e = 1.1$, $r_d = 0.2$, $e=0.001$, $d=0.00025$, $\mu =1$. (ii) Mutation parameter $\mu$ is increased to $\mu =100$ in order to illustrate smoothness in the $\mu > 0$ case.}
\label{egPF}
\end{figure}

For $\mu > 0$ the matrix $H_{\beta, \mu}$ has strictly positive off-diagonal elements and thus, by Theorem \ref{PerronF}, has a Perron-Frobenius eigenvalue (shown in Figure \ref{egPF}(i)), $\eta_\mathrm{PF}\left(H_{\beta, \mu}\right)$, which is the larger of the two real eigenvalues of $H_{\beta, \mu}$ and has a one-dimensional eigenspace spanned by a positive eigenvector. Of course one would expect this graph to be smooth and we note that although Figure \ref{egPF}(i) does not appear to be differentiable at the minimum, it does appear smooth on zooming in, and increasing our $\mu$ parameter we see in Figure \ref{egPF}(ii) that the minimum then does indeed appear differentiable but has a high curvature in the region of the minimum that is dependant on $\mu$. The minimal speed of the travelling-wave profile $w(\xi) = e^{- \beta \xi} q$ for a given $\mu > 0$ is given by
\begin{equation}\label{minmu}
\eta(\mu):= \inf_{\beta>0} \eta_\mathrm{PF}\left(H_{\beta, \mu}\right),
\end{equation}
the minimum value of the Perron-Frobenius eigenvalue $\eta_\mathrm{PF}\left(H_{\beta, \mu}\right)$ over $\beta>0$. We define $\beta (\mu)$ to be the value of $\beta$ at which $\eta(\mu)$ is attained.

Since calculating the minimal spreading speed involves minimizing in $\beta$ and the diffusion matrix $A$ is not a multiple of the identitiy, finding an explicit expression is not very tractable. However, by adapting an argument from \cite{crooks}, we show that $\eta(\mu)$ is a non-increasing function of the mutation parameter $\mu$, which is known to vary in biological systems. Note that this is related to the so-called ``reduction phenomenon'' discussed by Altenberg \cite{altenberg}, which roughly says that, under certain conditions, greater mixing results in lowered growth. Our result, on the other hand, states that greater mixing results in a slower speed of propagation. The following lemma ensures that the main theorem of Altenberg \cite[Theorem 6]{altenberg} holds in our setting. We can in fact, make use of \cite[Theorem 6(iii)]{altenberg} to simplify the proof of Proposition \ref{prop42}.


\begin{lemma}[Cohen, {\cite{cohen}}]\label{cohen}
Let $P, Q \in \mathbb{R}^{n \times n}$ such that $P$ is diagonal and $Q$ has positive off-diagonal elements. Then the Perron-Frobenius eigenvalue of $P+Q$ is a convex function of $P$; that is, given diagonal matrices $P_1$ and $P_2$ and $0 < \alpha < 1$,
\begin{equation}
\eta_\mathrm{PF} (\alpha P_1 + (1-\alpha)P_2 + Q) \leq \alpha \eta_\mathrm{PF} (P_1 + Q) + (1-\alpha) \eta_\mathrm{\,PF} (P_2 + Q).
\end{equation}
\end{lemma}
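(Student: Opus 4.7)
The plan is to write the Perron--Frobenius eigenvalue of $P+Q$ as a log-Laplace functional of a continuous-time Markov chain and then derive convexity via H\"older's inequality. Decompose $Q = L + D_Q$ where $L_{ij} = Q_{ij}$ for $i \neq j$ and $L_{ii} = -\sum_{j \neq i} Q_{ij}$, so that $L$ is the infinitesimal generator of a continuous-time Markov chain $(X_s)_{s \geq 0}$, and $D_Q$ is diagonal. Setting $V := P + D_Q$, which depends affinely on $P$, the identity $P+Q = L+V$ reduces the claim to showing that the map $V \mapsto \eta_\mathrm{PF}(L+V)$ is convex on the space of diagonal matrices.

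Next, invoke the Feynman--Kac identity
\begin{equation*}
\bigl(e^{t(L+V)}\bigr)_{ij} = \mathbb{E}^i\!\left[e^{\int_0^t V(X_s)\, ds}\,\mathbf{1}_{\{X_t = j\}}\right],
\end{equation*}
valid for any diagonal $V$. Since $Q$ has strictly positive off-diagonals, $L$ is irreducible, so Theorem \ref{PerronF} combined with the standard spectral expansion $e^{t(L+V)} = e^{t\eta_\mathrm{PF}(L+V)}\bigl(qu^{T} + o(1)\bigr)$ as $t \to \infty$ (with $q$, $u$ the positive right and left Perron eigenvectors normalised by $u^{T}q = 1$) yields
\begin{equation*}
\eta_\mathrm{PF}(L+V) = \lim_{t \to \infty} \frac{1}{t}\log \mathbb{E}^i\!\left[e^{\int_0^t V(X_s)\, ds}\right]
\end{equation*}
independently of the starting state $i$. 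For $V_\alpha = \alpha V_1 + (1-\alpha)V_2$ with $\alpha \in (0,1)$, H\"older's inequality with conjugate exponents $1/\alpha$ and $1/(1-\alpha)$ applied to the factorisation $e^{\int V_\alpha\, ds} = \bigl(e^{\int V_1\, ds}\bigr)^{\alpha}\bigl(e^{\int V_2\, ds}\bigr)^{1-\alpha}$ gives
\begin{equation*}
\mathbb{E}^i\!\left[e^{\int_0^t V_\alpha(X_s)\, ds}\right] \leq \left(\mathbb{E}^i\!\left[e^{\int_0^t V_1(X_s)\, ds}\right]\right)^{\alpha}\left(\mathbb{E}^i\!\left[e^{\int_0^t V_2(X_s)\, ds}\right]\right)^{1-\alpha};
\end{equation*}
taking logarithms, dividing by $t$, and letting $t \to \infty$ delivers the required convexity.

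The main technical obstacle is justifying the Gelfand-type limit in the displayed formula for $\eta_\mathrm{PF}(L+V)$: one needs the Perron--Frobenius eigenvalue to be simple and to strictly dominate the real parts of all other eigenvalues, together with strictly positive left and right eigenvectors so that the leading coefficient in the asymptotic expansion is nonzero. Both requirements follow from Theorem \ref{PerronF} because the positivity of the off-diagonals of $Q$ makes $L+V$ essentially non-negative and irreducible for every diagonal $V$. A purely matrix-analytic alternative that avoids the probabilistic interpretation is to use the Lie--Trotter formula $e^{t(P+Q)} = \lim_n (e^{tP/n}e^{tQ/n})^n$, expand the entries as path sums weighted by products of the form $e^{tP_{kk}/n}$, and apply a discrete H\"older inequality to those products before passing to the limit.
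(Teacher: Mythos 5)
Your proof is correct, but note that the paper does not prove this lemma at all: it is imported verbatim from Cohen \cite{cohen}, so there is no internal proof to compare against. Your argument is a clean, self-contained justification. The reduction $P+Q = L+V$ with $L$ a Markov generator and $V = P + D_Q$ diagonal and affine in $P$ is exactly the right normalisation; the Feynman--Kac representation, the Gelfand-type limit $\eta_\mathrm{PF}(L+V) = \lim_{t\to\infty} t^{-1}\log \mathbb{E}^i\bigl[e^{\int_0^t V(X_s)\,ds}\bigr]$ (whose validity you correctly tie to simplicity of the dominant eigenvalue and strict positivity of both Perron eigenvectors, all supplied by Theorem \ref{PerronF} since $L+V$ is essentially non-negative and irreducible when $Q$ has strictly positive off-diagonals), and the H\"older step are all sound, and the affinity of $V$ in $P$ ensures that convexity in $V$ transfers to convexity in $P$ as the lemma requires. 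It is worth pointing out that your ``purely matrix-analytic alternative'' via the Lie--Trotter formula and a discrete H\"older inequality is essentially Cohen's original 1981 argument, whereas your primary route is the standard probabilistic proof in the Donsker--Varadhan spirit; the probabilistic version buys a transparent interpretation of the dominant eigenvalue as an exponential growth rate of a path functional (which meshes nicely with the paper's later use of the lemma through Altenberg's reduction principle), while the citation-only approach of the paper simply keeps the exposition short.
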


\begin{proposition}\label{prop42}
$\eta (\mu)$ is a non-increasing function of $\mu$.
\end{proposition}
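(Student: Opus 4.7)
The plan is to reduce monotonicity of $\eta(\mu)$ to the corresponding monotonicity of $\mu \mapsto \eta_{\mathrm{PF}}(H_{\beta,\mu})$ for each fixed $\beta > 0$, and then to obtain the latter from Altenberg's reduction principle. This reduction is immediate: since $\eta(\mu) = \inf_{\beta > 0} \eta_{\mathrm{PF}}(H_{\beta,\mu})$ and the pointwise infimum of a family of non-increasing functions is itself non-increasing, it suffices to show that, for each $\beta > 0$ and each $\mu_1 < \mu_2$, one has $\eta_{\mathrm{PF}}(H_{\beta,\mu_1}) \geq \eta_{\mathrm{PF}}(H_{\beta,\mu_2})$; taking infima over $\beta > 0$ then yields $\eta(\mu_1) \geq \eta(\mu_2)$.

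For the fixed-$\beta$ statement I would decompose $H_{\beta,\mu}$ into a diagonal growth part and a mixing part,
\begin{equation*}
H_{\beta,\mu} = D_\beta + \beta^{-1}\mu\, M, \qquad D_\beta := \beta A + \beta^{-1} g'(0).
\end{equation*}
Because the mutation-free nonlinearity $g$ in \eqref{g} has no linear off-diagonal terms, $g'(0) = \mathrm{diag}(r_e, r_d)$, so $D_\beta$ is diagonal with strictly positive entries. The mutation matrix $M$ from \eqref{ift} has vanishing column sums, strictly positive off-diagonal entries (hence is irreducible), and is therefore the infinitesimal generator of a column-stochastic semigroup, i.e.\ a Markov generator in the sense of Altenberg. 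This places us precisely in the setting of the reduction principle: Lemma \ref{cohen} supplies the convexity of the Perron--Frobenius eigenvalue in its diagonal argument that underlies \cite[Theorem 6(iii)]{altenberg}, whose conclusion is that $\nu \mapsto \eta_{\mathrm{PF}}(D_\beta + \nu M)$ is non-increasing on $[0,\infty)$. Taking $\nu = \beta^{-1}\mu$, an order-preserving change of variable for each fixed $\beta > 0$, transfers this monotonicity to $\mu \mapsto \eta_{\mathrm{PF}}(H_{\beta,\mu})$, and the reduction in the first paragraph finishes the argument.

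The main item to check is the structural identification: verifying that $M$ satisfies the Markov-generator hypotheses of \cite[Theorem 6(iii)]{altenberg} and that $D_\beta$ plays the role of the selection/growth diagonal. Both follow directly from the explicit forms of $M$ and $g'(0)$ recorded above, so I do not anticipate any serious obstacle beyond correctly aligning notation with \cite{altenberg}; once the decomposition $H_{\beta,\mu} = D_\beta + \beta^{-1}\mu M$ is in place, the reduction principle does the essential work.
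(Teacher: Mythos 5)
Your proof is correct, but it takes a genuinely different (and more streamlined) route than the paper's. You prove the stronger, pointwise-in-$\beta$ statement that $\mu \mapsto \eta_\mathrm{PF}(H_{\beta,\mu})$ is non-increasing for every fixed $\beta>0$, by decomposing $H_{\beta,\mu}=D_\beta+\beta^{-1}\mu M$ with $D_\beta$ diagonal and $M$ an irreducible Markov generator, invoking the reduction principle \cite[Theorem 6(iii)]{altenberg} in the mixing coefficient $\nu=\beta^{-1}\mu$, and then observing that an infimum of non-increasing functions is non-increasing. The paper instead fixes $\mu_0$, works only at the optimal decay rate $\beta(\mu_0)$, and normalises by setting $P=\beta(\mu_0)^2A-\beta(\mu_0)\eta(\mu_0)I+g'(0)$ so that $\eta_\mathrm{PF}(P+\mu_0 M)=0$; the homogeneity $\eta_\mathrm{PF}(P+\mu M)=\mu\,\eta_\mathrm{PF}(\mu^{-1}P+M)$ combined with \cite[Theorem 6(iii)]{altenberg} and $\eta_\mathrm{PF}(M)=0$ then propagates the sign to all $\mu>\mu_0$, giving $\eta_\mathrm{PF}(H_{\beta(\mu_0),\mu})\leq\eta(\mu_0)$ and hence $\eta(\mu)\leq\eta(\mu_0)$ after taking the infimum over $\beta$. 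Both arguments rest on exactly the same external input --- the reduction phenomenon applied to $M$, whose vanishing column sums and positive off-diagonal entries give $\eta_\mathrm{PF}(M)=0$, as you correctly verify --- but yours is shorter and buys the extra information that the whole curve $\beta\mapsto\eta_\mathrm{PF}(H_{\beta,\mu})$ moves down pointwise as $\mu$ increases, whereas the paper's sign normalisation isolates precisely where the hypothesis $\eta_\mathrm{PF}(M)\leq 0$ enters and would still function if one only had the convexity of Lemma \ref{cohen} rather than the packaged monotonicity. One small caution: your aside that Lemma \ref{cohen} ``underlies'' \cite[Theorem 6(iii)]{altenberg} glosses over the normalisation and scaling step needed to pass from convexity in the diagonal to monotonicity in the mixing strength --- which is exactly the work the paper's choice of $P$ performs --- but since you cite the monotone dependence directly from Altenberg rather than deriving it from Lemma \ref{cohen}, this does not leave a gap in your argument.
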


\begin{proof}
Take $\mu_0 > 0$, define $Z$ to be the $2 \times 2$ zero matrix, and define
\begin{equation}
P:= \beta(\mu_0)^2 A - \beta(\mu_0) \eta (\mu_0) I + g'(0)
\end{equation}
Note that $\beta(\mu_0) > 0$ and
\begin{align}
\eta_\mathrm{PF}(\beta^2 A - \beta \eta(\mu_0) I + g'(0) + \mu_0 M) &\geq 0, \,\,\,\, \forall \, \beta > 0,\\
\eta_\mathrm{PF}(\beta(\mu_0)^2 - \beta(\mu_0) \eta(\mu_0) I + g'(0) + \mu_0 M) &= 0.
\end{align}
Now for any $\mu > 0$, we know that
\begin{equation}\label{pfout}
\eta_\mathrm{PF} \left(P+\mu M\right) = \mu \, \eta_\mathrm{PF} \left(\frac{1}{\mu}P + M\right),
\end{equation}
and in particular, $\eta_\mathrm{PF}(P+\mu M)$ and $\eta_\mathrm{PF} \left( \frac{1}{\mu}P + M \right)$ have the same sign. Moreover,
\begin{equation}\label{pfz}
\eta_\mathrm{PF} \left(\frac{1}{\mu_0}P + M\right) = 0.
\end{equation}
Then using \cite[Theorem 6(iii)]{altenberg}, it follows from \eqref{pfz} and  $\eta_\mathrm{PF} (M) = 0$ that,
\begin{equation}
\eta_\mathrm{PF} \left( \frac{1}{\mu}P+M \right) \leq 0, \,\,\,\,\,\,\,\, \mu > \mu_0.
\end{equation}
Using \eqref{pfout}, this implies
\begin{equation}
\eta_\mathrm{PF} \left( P+\mu M \right) \leq 0, \,\,\,\,\,\,\,\, \mu > \mu_0,
\end{equation}
and substituting in the full expression for $P$ and dividing by $\beta(\mu_0)$ then yields
\begin{equation}
\eta_\mathrm{PF} (\beta(\mu_0)A + \beta(\mu_0)^{-1}(g'(0) + \mu M)) \leq \eta(\mu_0), \,\,\,\,\,\,\,\, \mu > \mu_0.
\end{equation}
Hence
\begin{equation}\label{cresult}
\inf_{\beta > 0} \eta_\mathrm{PF}(\beta A + \beta^{-1}(g'(0) + \mu M)) \leq \eta(\mu_0).
\end{equation}
Since the expression on the left hand side of \eqref{cresult} is the definition of $\eta(\mu)$, we have
\begin{equation}
\eta(\mu) \leq \eta(\mu_0).
\end{equation}
Since this holds for any $\mu > \mu_0$ and we can choose $\mu_0$ arbitrarily small we have therefore shown that $\eta (\mu)$ is a non-increasing function of $\mu$.
\end{proof}

\subsection{Behaviour at the leading edge in the limit $\mu \to 0$}

We will study the Perron-Frobenius eigenvector $q_\beta(\mu)$ corresponding to the eigenvalue $\eta (\mu)$ in the limit as the mutation rate $\mu \to 0$ in order to determine the ratio of the phenotypes in the leading edge. We do this under conditions \eqref{parmsrD} and \eqref{fastersp} on the dispersal and growth parameters, which ensures that the faster speed $v_f$ is obtained as $\mu \to 0$ and, as we will see, results in both phenotypes being present in the leading edge.

First note that for $\mu = 0$, the matrix $H_{\beta,0}$ is diagonal, and therefore $H_{\beta,0}$ does not have a Perron-Frobenius eigenvalue. However the two eigenvalues of the matrix $H_{\beta,0}$ are clearly
\begin{equation}\label{eigenvs12}
\beta D_d + \beta^{-1} r_d, \,\,\,\,\,\,\,\,\,\,\,\,\,\,\,\, \beta D_e + \beta^{-1} r_e,
\end{equation}
which we plot in Figure \ref{corner}(i), for certain parameters, as functions of $\beta$, and in Figure \ref{corner}(ii) we consider the maximum of these eigenvalues for each $\beta$ by analogy with the Perron-Frobenius eigenvalue $\eta_\mathrm{PF}(H_{\beta, \mu})$ when $\mu>0$, note the similarity here to Figure \ref{egPF}(i). The minimal travelling wave speed, which we call $\eta_0$, is obtained at the minimum of the graph in Figure \ref{corner}(ii). We denote the value of $\beta$ at which this minimal speed is obtained $\beta^*$. It can be shown that as $\mu \rightarrow 0$ it holds that $\eta(\mu) \rightarrow \eta_0$ and $\beta(\mu) \rightarrow \beta^*$ \cite{aled}.

\begin{figure}[ht]
\begin{center}
\includegraphics[scale=1.0]{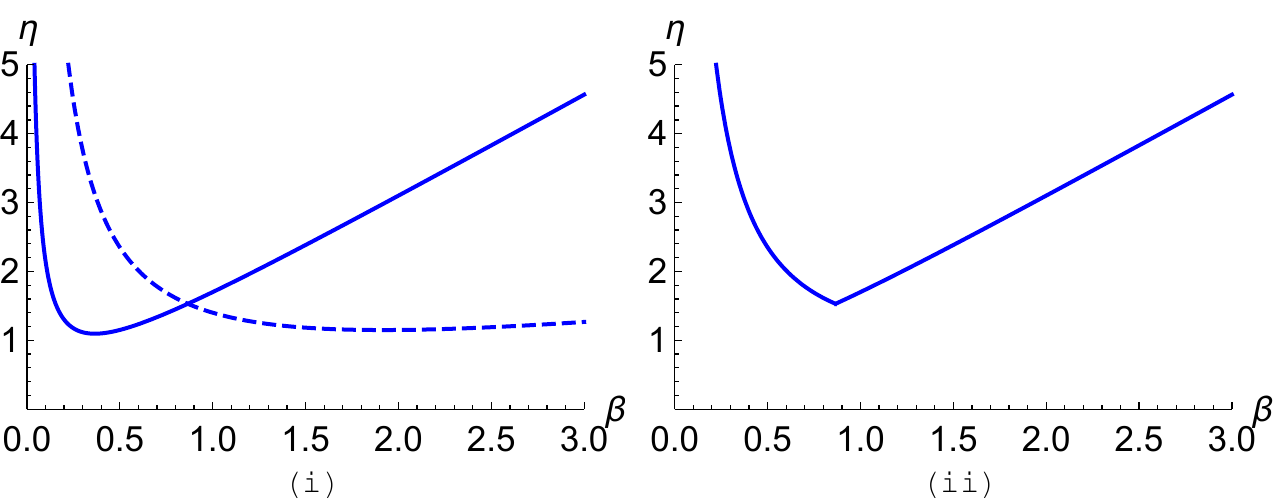}
\end{center}
\caption{(i) Eigenvalues of $H_\beta$ in the case of $\mu=0$. Parameters used are $D_e = 0.3$, $D_d = 1.5$, $r_e = 1.1$, $r_d = 0.2$. (ii) Travelling wave speed $\eta$ corresponding to each $\beta$ for $\mu=0$. The minimal travelling wave speed, $\eta_0$, is obtained at the value of $\beta$ at which both eigenvalues are equal, $\beta^*$.}
\label{corner}
\end{figure}

Note that in Figure \ref{corner} the minimum over $\beta$ of the maximum of the eigenvalues \eqref{eigenvs12} is the point at which they both meet. However there are also regions of parameters in which the eigenvalues \eqref{eigenvs12} meet in such a way that this is not the case. The condition that the minima of the two eigenvalues lie on either side of the point at which they meet, which is equivalent to the minimum value of $\beta$ of the maximum of the eigenvalues being at the crossing point, as in Figure \ref{corner}, is
\begin{equation}
\sqrt{\frac{r_d}{D_d}} < \sqrt{\frac{r_e - r_d}{D_d - D_e}} < \sqrt{\frac{r_e}{D_e}}.
\end{equation}
Closer examination of this condition reveals that it is identical to the condition \eqref{fastersp} imposed by \cite{elliott} which ensures that the faster spreading speed $v_f$ \eqref{vfspeed} is obtained in the limit as $\mu \to 0$. We henceforth write this condition in the form
\begin{equation}\label{abcondition}
a:= \beta^{*^2} D_d - r_d > 0, \,\,\,\,\,\,\,\, b := -\beta^{*^2} D_e + r_e > 0.
\end{equation}
This condition, as well as
\begin{equation}\label{rdcond2}
r_e > r_d,\,\, D_d > D_e,
\end{equation}
are assumed throughout the rest of this section. We consider this case, where $v_f$ is obtained as the limitting speed as $\mu \to 0$, to be of greatest biological interest, as unlike $v_e$ and $v_d$, $v_f$ is dependant on the traits of both morphs and occurs as a result of polymorphism.

In Figure \ref{corner}, conditions \eqref{abcondition} and \eqref{rdcond2} are satisfied, and under these conditions, the minimal travelling wave speed $\eta_0$ is obtained at the point at which the two eigenvalues of $H_{\beta,0}$ meet, therefore
\begin{equation}\label{eta0betaS}
\eta_0 = \beta^* D_d + \beta^{*^{-1}} r_d = \beta^* D_e + \beta^{*^{-1}} r_e, \,\,\,\,\,\,\,\,\,\,\,\, \beta^* = \sqrt{\frac{r_e - r_d}{D_d - D_e}}.
\end{equation}
Further, when condition \eqref{abcondition} is satisfied, $H_{\beta^* ,0}$ is a multiple of the identity it has a repeated eigenvalue with a two-dimensional eigenspace.\bigskip

We now investigate the Perron-Frobenius eigenvector $q$ of $H_\mathrm{\beta (\mu), \mu}$ in the limit $\mu \rightarrow 0$ under conditions \eqref{abcondition} and \eqref{rdcond2}. As noted earlier we will only consider the region of parameters in which \eqref{abcondition} holds. Recall that $\lim_{\mu \to 0} \eta(\mu) = \eta_0$ and $\lim_{\mu \to 0} \beta(\mu) = \beta^*$, and assume that the following limits exist,
\begin{equation}\label{limexist}
q_0 = \lim_{\mu \rightarrow 0} q, \,\,\,\, \eta'(0) = \lim_{\mu \rightarrow 0} \frac{\eta (\mu) - \eta_0}{\mu}, \,\,\,\,  \beta'(0) = \lim_{\mu \rightarrow 0} \frac{\beta (\mu) - \beta^*}{\mu}.
\end{equation}
We numerically provide further justification of these assumptions in Figure \ref{mugraphcon}. Rewriting \eqref{evproblemo} as a system of scalar equations and taking $\beta = \beta(\mu)$ we obtain
\begin{align}
		\left(\beta(\mu) D_e +  \frac{(r_e - \mu e)}{\beta(\mu)} - \eta(\mu) \right) + \frac{\mu d}{\beta(\mu)} \frac{q_2}{q_1} &= 0,\label{r1}\\
		\left(\beta(\mu) D_d +  \frac{(r_d - \mu d)}{\beta(\mu)} - \eta(\mu) \right)\frac{q_2}{q_1} + \frac{\mu e}{\beta(\mu)} &= 0,\label{r2}
\end{align}
for $\mu > 0$. We add and subtract $\eta_0$ from \eqref{r1} and \eqref{r2}, and divide by $\mu \beta(\mu)^{-1}$ to obtain,
\begin{align}
\beta^* \frac{(\beta D_e + \beta^{-1}r_e)-(\beta^* D_e + \beta^{*^{-1}}r_e)}{\mu} - \beta^* \frac{\eta-\eta_0}{\mu} - e+d \frac{q_2}{q_1} = 0\label{system01}\\
\left(\beta^* \frac{(\beta D_d + \beta^{-1}r_d)-(\beta^* D_d + \beta^{*^{-1}}r_d)}{\mu} - \beta^* \frac{\eta-\eta_0}{\mu} - d\right) \frac{q_2}{q_1} + e = 0\label{system02}
\end{align}
Bearing in mind \eqref{limexist} and taking $\mu \rightarrow 0$ we obtain a system of two equations in the three unknowns $q_{0_2}/q_{0_1}$,  $\eta'(0)$ and $\beta'(0)$,
	\begin{align}
			\beta^* \left[ D_e \beta'(0) - \frac{r_e \beta'(0)}{\beta^{*^2}} - \eta'(0)  \right] - e + d \left( \frac{q_{0_2}}{q_{0_1}} \right) &= 0,\label{system1}\\
			\beta^* \left[ D_d \beta'(0) - \frac{r_d \beta'(0)}{\beta^{*^2}} - \eta'(0)  \right] \left( \frac{q_{0_2}}{q_{0_1}} \right) - d \left( \frac{q_{0_2}}{q_{0_1}} \right) + d &= 0.\label{system2}
	\end{align}
In order to obtain a third equation, recall the that eigenvalues $\lambda$ of the matrix $H_{\beta,\mu}$ satisfy $\det{(H_{\beta,\mu} - \lambda I)} = 0$, so that
	\begin{equation}\label{eigdet}
		(\lambda - \beta D_e - \beta^{-1}(r_e - \mu e))(\lambda - \beta D_d - \beta^{-1}(r_d - \mu d)) - \beta^{-2}\mu^2 e d = 0,
	\end{equation}
and further at $\beta = \beta (\mu)$, it holds that
	\begin{equation}
		\frac{\partial \lambda}{\partial \beta} = 0, \,\,\,\, \mathrm{when}\,\,\lambda=\eta(\mu).
	\end{equation}
Differentiating \eqref{eigdet} with respect to $\beta$ and setting $\beta = \beta(\mu)$, $\lambda = \eta(\mu)$ , we then multiply the result by $\beta^3$ to obtain
	\begin{align}
		(\beta^2 (D_e &+ D_d) - (r_e + r_d - \mu e - \mu d)) \beta \, \eta(\mu) - 2 \mu^2 e  d = \label{syseq3}\\
		&( \beta^2 D_e - (r_e - \mu e))( \beta^2 D_d + (r_d - \mu d)) + ( \beta^2 D_e + (r_e - \mu e)( \beta^2 D_d -  (r_d - \mu d)).\nonumber
	\end{align}
Then differentiating with respect to $\mu$ and let  $\mu \rightarrow 0$ yields a third equation relating $\beta'(0)$ and $\eta'(0)$, namely
	\begin{equation}
		2 \beta^* (b D_d - a D_e) \beta'(0) + \eta_0 (a-b) \beta'(0) + \beta^* (a-b) \eta'(0) = (bd - ae),\label{system3}
	\end{equation}
where $a$ and $b$ are as defined in \eqref{abcondition} and
\begin{equation}
\eta_0 := \lim_{\mu \rightarrow 0} \eta (\mu) = \beta^* D_d + \beta^{*^{-1}} r_d = \beta^* D_e + \beta^{*^{-1}} r_e.
\end{equation}
We thus have a system of three equations \eqref{system1}, \eqref{system2} and \eqref{system3}, which we solve. Eliminating $\eta'(0)$ and $\beta'(0)$ gives the explicit expression for $q_{0_2}/q_{0_1}$,
	\begin{equation}\label{qratio}
		q_\mathrm{ratio} := \frac{q_{0_2}}{q_{0_1}} = \sqrt{\frac{be}{ad}} = \sqrt{\frac{(r_e - \beta^{*2} D_e) e}{(\beta^{*2} D_d - r_d) d}}
	\end{equation}
Recall that this is the ratio of the two components of the eigenvector of \eqref{evproblemo} in the limit $\mu \rightarrow 0$, where $q_2$ represents the disperser and $q_1$ the establisher. This quantity therefore predicts the ratio of the two morphs present in the leading edge of the travelling wave solution to our system. Note the presence of the positive quantities $a$ and $b$ in \eqref{qratio}, which emphasizes that this only holds in the zone \eqref{fastersp} in which the faster speed $v_f$ is obtained.

\begin{figure}[ht]
\begin{center}
\includegraphics[scale=1.0]{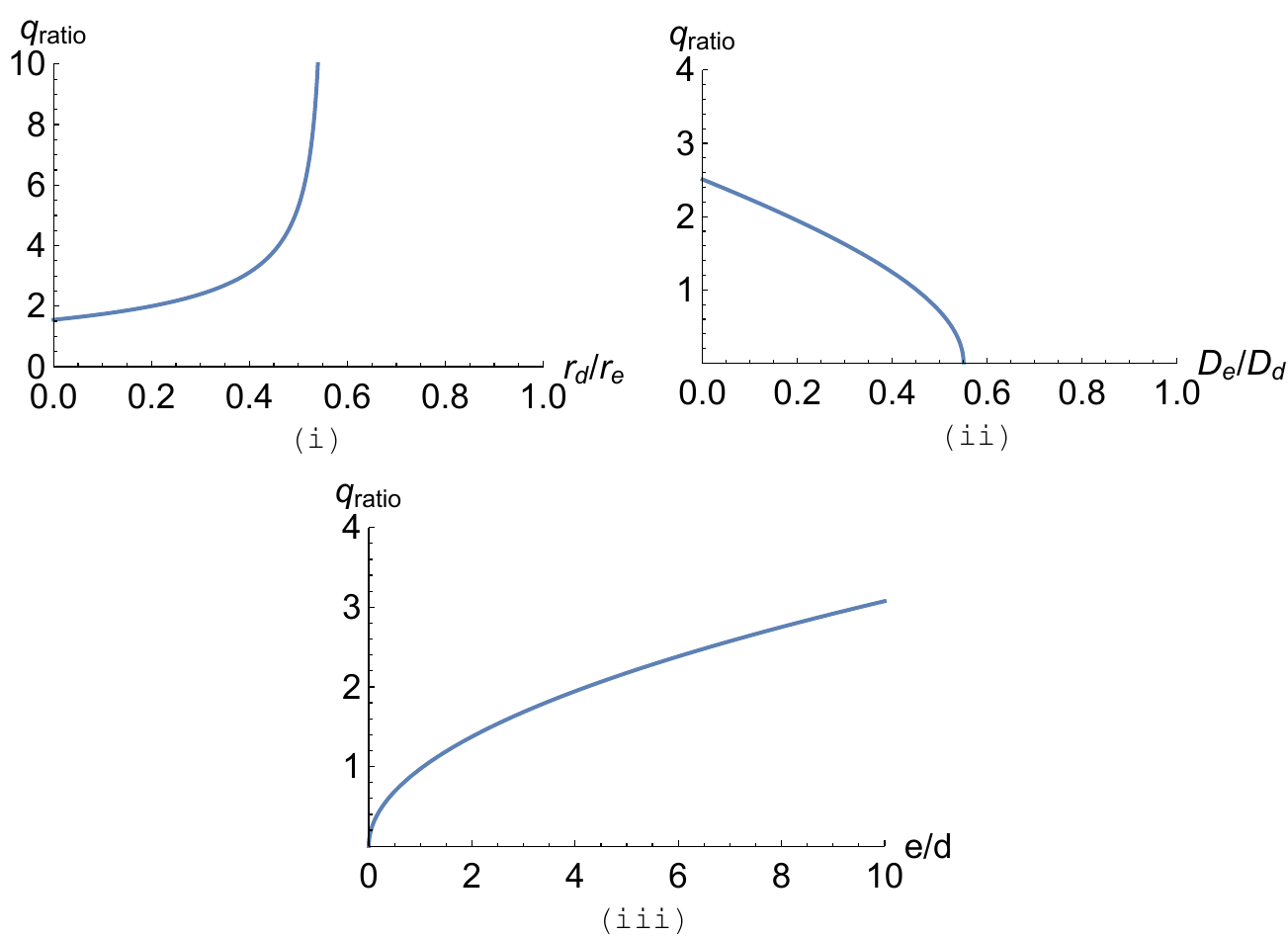}
\end{center}
\caption{Parameter sweeps of equation \eqref{qratio2}. We fix $D$ and $m$ in $(i)$, $r$ and $m$ in $(ii)$, and $r$ and $D$ in $(iii)$. When fixed, parameters take the values $r = 0.2/1.1$, $D = 0.3/1.5$, $m = 0.001/0.00025$.}
\label{rerddedd}
\end{figure}

\begin{figure}[ht]
\begin{center}
\includegraphics[scale=1.0]{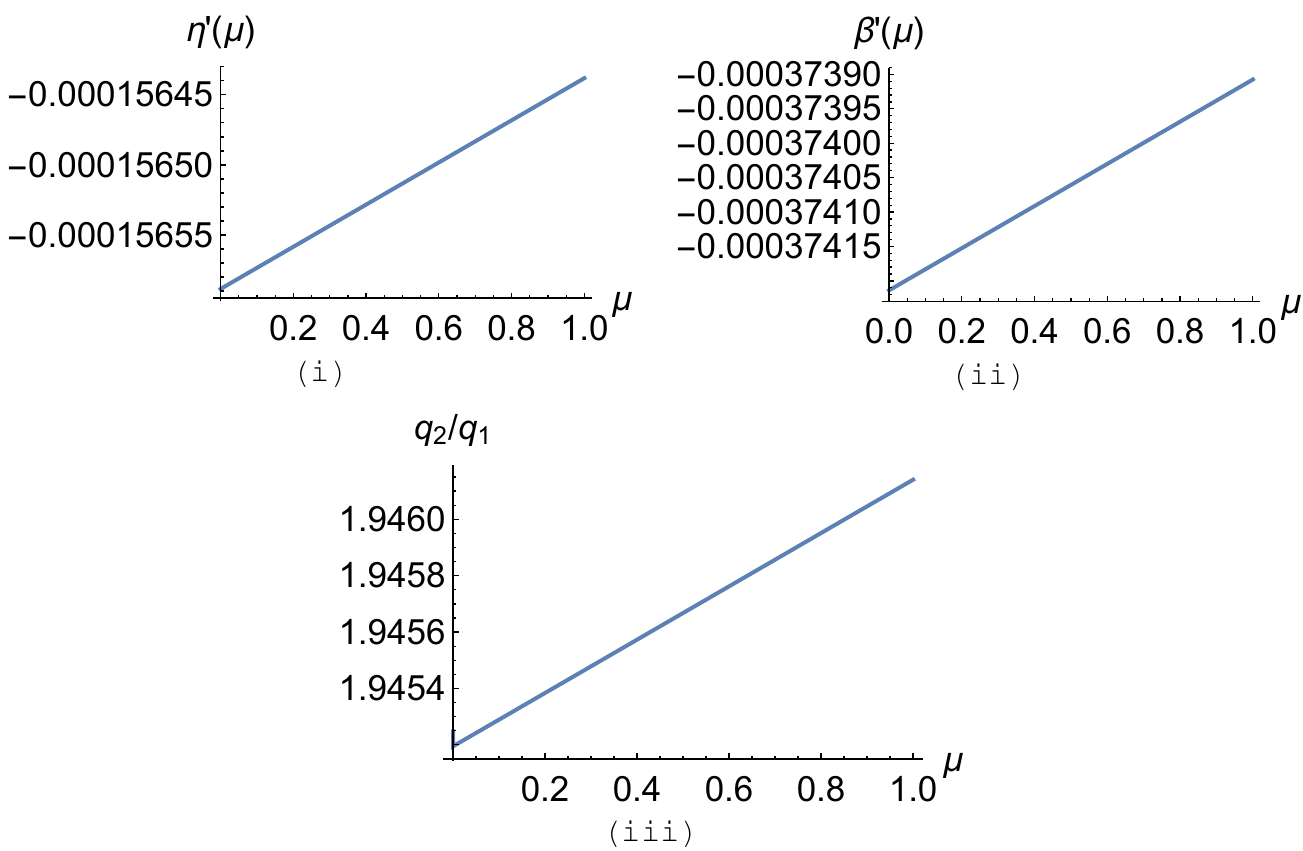}
\end{center}
\caption{Numerical solutions of $\eta'(\mu)$, $\beta'(\mu)$ and $q_2/q_1$ demonstrating convergence as $\mu \rightarrow 0$, obtained using Mathematica. Parameter values are $r_e = 1.1$, $r_d = 0.2$, $D_e = 0.3$, $D_d = 1.5$, $e = 0.001$ and $d = 0.00025$.}
\label{mugraphcon}
\end{figure}	

\begin{figure}[ht]
\begin{center}
\includegraphics[scale=1.0]{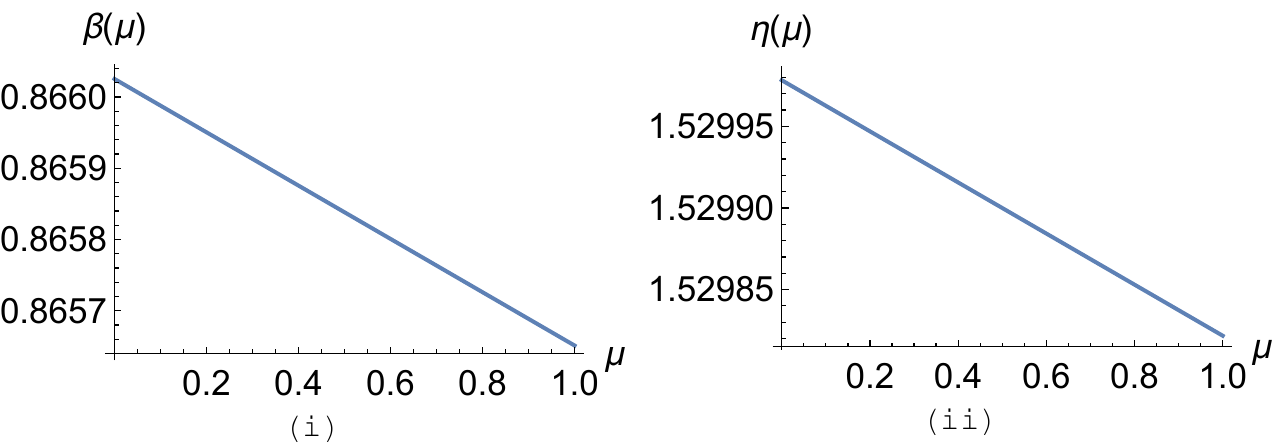}
\end{center}
\caption{Numerical solutions of $\beta(\mu)$ and $\eta(\mu)$ demonstrating convergence to $\beta^*$ and $\eta_0$, respectively, as $\mu \rightarrow 0$, obtained using Mathematica. The behaviour of $\eta(\mu)$ is consistent with our result that $\eta(\mu)$ is a non-increasing function of $\mu$. Parameter values are $r_e = 1.1$, $r_d = 0.2$, $D_e = 0.3$, $D_d = 1.5$, $e = 0.001$ and $d = 0.00025$.}
\label{mugraph}
\end{figure}

In order to investigate the effects of parameters on the proportion of each morph in the leading edge, note that if we make the substitutions $r = r_d/r_e$, $D = D_e/D_d$, $m = e/d$ we can rewrite \eqref{qratio} as a function of only three variables, the ratio of the growth rates $r$, the ratio of the dispersal rates $D$, and the ratio of the mutation rates $m$, namely
\begin{equation}\label{qratio2}
q_\mathrm{ratio} = \sqrt{\left( \frac{2D - rD - 1}{2r - rD - 1} \right) m}.
\end{equation}
This is biologically very interesting and mathematically elegant.

It is instructive and biologically relevant to comment on the effect on $q_\mathrm{ratio}$ of varying parameters $r$, $D$, and $m$. Figure \ref{rerddedd} illustrates how the ratio of growth, dispersal and mutation rates affects the proportion of each morph in the leading edge of the invasion wave. We see in Figure \ref{rerddedd} (i) that as $r_e$ increases (or $r_d$ decreases) there is an increase in the establisher morph in the leading edge. However, as $q_\mathrm{ratio}$ does not fall below $1$ there is always a larger amount of the disperser morph in the leading edge. As we increase $r_e$ to infinity (or decrease $r_d$ to 0) $q_\mathrm{ratio}$ tends to $\sqrt{(1-2/D)m}$. Note that $q_\mathrm{ratio}$ blows up when our assumption that $a>0$ and $b>0$ no longer holds. Figure \ref{rerddedd} (ii) tells us that as we increase $D_d$ (or decrease $D_e$) there is an increase in the disperser morph in the leading edge. If $D_e$ and $D_d$ are close enough in value then it is possible that the establisher morph outnumbers the disperser in the leading edge, which we see in the region in which $q_\mathrm{ratio}$ drops below one. As we increase $D_d$ to infinity (or decrease $D_e$ to 0) $q_\mathrm{ratio}$ tends to $\sqrt{m/(1-2r)}$. In this case, ratio $q_\mathrm{ratio}$ becomes zero when our assumption that $a>0$ and $b>0$ no longer holds. In Figure \ref{rerddedd} (iii) we see that if the mutation rate of a morph is increased it results in a decrease in the density of that morph in the leading edge. Since the ratio $m$ is not restricted to a particular range, like $r$ and $D$ due to \eqref{parmsrD}, we see that the mutation rate is also able to change which morph is more prevalent in the leading edge. If we fix all parameters but the mutation rates in \eqref{qratio2} we obtain $q_\mathrm{ratio} = \sqrt{k m}$, where $k > 0$ is a constant. Thus $q_\mathrm{ratio}$ goes to zero as $m$ grows smaller and blows up as $m$ tends to infinity. These provide us with interesting preditctions which could be tested experimentally with real biological systems.

Recall that we made assumptions \eqref{limexist} on the existence of $q_0$, $\eta'(0)$ and $\beta'(0)$. We finish by providing some justification for these assumptions by solving equations \eqref{r1}, \eqref{eigdet} and \eqref{syseq3} as a system of 3 equations in $q_2/q_1$, $\eta(\mu)$ and $\beta(\mu)$. Differentiating the latter two by $\mu$ we plot $\eta'(\mu)$, $\beta'(\mu)$ and $q_2/q_1$ as functions of $\mu$ in Figure \ref{mugraphcon} demonstrating the existence of $\eta'(0)$, $\beta'(0)$ and $q_\mathrm{ratio}$ for this set of parameters. Note that as $\mu$ increases we see an increase in the proportion of the phenotype $n_d$, which is expected as for this set of parameters $e > d$. Finally, we also plot $\beta(\mu)$ and $\eta(\mu)$ as functions of $\mu$ in Figure \ref{mugraph}. As $\mu \rightarrow 0$ we see $\beta(\mu)$ and $\eta(\mu)$ converge to the values $\beta^*$ and $\eta_0$, respectively. Further, as we increase $\mu$ we observe a decrease in the minimal spreading speed $\eta (\mu)$, this is consistent with our result in Section 4.1.

\section{Conclusions}

In this work we considered a Lotka-Volterra competition system with the addition of a small linear mutation rate. We first exploit the implicit function theorem to verify that the addition of a small linear mutation rate limits the system to only two steady states and prove that this system is linearly determinate under some assumptions on the smallness of the inter-morph competition and mutation parameters. We then use our result on linear determinacy in order to prove that the minimal travelling wave speed $\eta(\mu)$ is a non-increasing function of the mutation rate $\mu$, which is related to the result of Altenberg \cite{altenberg} that greater mixing slow growth. We also determine the dependence of the distribution of morphs in the leading edge on each trait in the $\mu \to 0$ limit, in the zone in which the condition \eqref{fastersp} for the faster speed $v_f$ is satisfied.

Note that in the absence of mutation, Tang and Fife \cite{tang} established the existence of travelling fronts for \eqref{elliott} for all speeds greater than or equal to $\max \{ 2 \sqrt{r_e D_e}, \, 2 \sqrt{r_d D_d}\}$. Thus, interestingly, in the case when $r_e$, $r_d$, $D_e$, $D_d$ satisfy \eqref{fastersp} and \eqref{intersmall} holds, the limit as $\mu \rightarrow 0$ of the minimal front speed $\eta (\mu)$ is strictly larger than the minimal front speed for \eqref{elliott} in the absence of mutation.

It is worth comparing and contrasting this work and that of Griette and Raoul \cite{griette}. They consider travelling wave solutions for a system similar to \eqref{elliott}, in which both morphs disperse at the same rate, $D_e = D_d$, and mutate at the same rate $\mu_e = \mu_d$. We, on the other hand, address a biologically realistic and interesting case motivated by systems in which $D_d > D_e$ and $r_e > r_d$, treat spreading speeds as well as travelling waves using an approach different from that of \cite{griette} to show linear determinacy, benefiting from \cite{wang}, and then study the existence of the linearised problem by exploiting ideas from Perron-Frobenius theory. 

Interestingly, Girardin \cite{girardin} in a recent preprint (concomitant to our study), uses different methods to investigate characteristics of spreading speeds and travelling waves that apply to systems of $N$ species related to system \eqref{elliott}. Importantly, our approach is is different and biologically more realistic and relevant, concentrating on system \eqref{elliott} when the mutation rate is small, rigorously proving claims of Elliot and Cornell \cite{elliott}, and in particular, studying in detail the ecologically important questions of how the speed and leading edge of the minimal speed travelling front solutions of the linearised problem depend on parameters in the case of small mutation rate. The latter on its own is already an important result.

A natural progression of our work would be a detailed analysis of systems for $N$ species, in particular, dependence of spreading speeds and the composition of the morphs in the leading edge on parameters. The methods developed in Section 4 have the clear potential to provide interesting results in the $N$ species case. Characterising the spreading speed as a non-increasing function of the mutation $\mu$ as we did in Section 4.1 should extend without much difficulty as we used a matrix argument which did not depend explicitly on the number of species. It would also be of interest to see if linear determinacy could be proven for models which incorporate mutation in a different manner, such as the model of Bouin et al \cite{bouin}, or a model in which the mutation coefficients themselves are a function of the population density.

\section*{Acknowledgements}

We would like to acknowledge Natalia Petrovskaya at the University of Birmingham and Sergei Petrovskii at the University of Leicester for their feedback and for organizing the META Mathematical Ecology: Theory and Applications series of events. We would also like to thank L\'{e}o Girardin at Laboratoire Jacques-Louis Lions, as well as Stephen Cornell and Vincent Keenan at the University of Liverpool for their insight and several interesting discussions. The first author would like to acknowledge Swansea University and the College of Science at Swansea University for financial support in the form of a PhD studentship.

\end{document}